\crefname{hypothesis}{Hypothesis}{Hypotheses}
\numberwithin{figure}{section}
\numberwithin{table}{section}
\Crefname{ALC@unique}{Line}{Lines}
\def\d{{\mathrm d}}
\def\G{{\mathcal G}}
\def\F{{\mathcal F}}
\def\R{{\mathbb R}}
\def\AA{\text{${\mathcal O}\hskip-3pt\iota$}}
\def\<{{\langle }}
\def\>{{\rangle }}
\colorlet{texcscolor}{blue!50!black}
\colorlet{texemcolor}{red!70!black}
\colorlet{texpreamble}{red!70!black}
\colorlet{codebackground}{black!25!white!25}
\lstdefinestyle{siamlatex}{%
  style=tcblatex,
  texcsstyle=*\color{texcscolor},
  texcsstyle=[2]\color{texemcolor},
  keywordstyle=[2]\color{texemcolor},
  moretexcs={cref,Cref,maketitle,mathcal,text,headers,email,url},
}
\DeclareTotalTCBox{\code}{ v O{} }
{ 
  fontupper=\ttfamily\color{black},
  nobeforeafter,
  tcbox raise base,
  colback=codebackground,colframe=white,
  top=0pt,bottom=0pt,left=0mm,right=0mm,
  leftrule=0pt,rightrule=0pt,toprule=0mm,bottomrule=0mm,
  boxsep=0.5mm,
  #2}{#1}
\patchcmd\newpage{\vfil}{}{}{}
\title{Robust Convergence of Parareal Algorithms with Arbitrarily High-order Fine Propagators
\thanks{\funding{The work of J.\ Yang is supported by National Natural Science Foundation of China (NSFC) Grant No. 11871264
and the research of Z.\ Yuan and Z.\ Zhou is partially supported by Hong Kong RGC grant (No. 15304420).}}}
\author{Jiang Yang\thanks{Department of Mathematics \& SUSTech International Center for Mathematics,
Southern University of Science and Technology, Shenzhen 518055, China.
(\texttt{yangj7sustech.edu.cn})}
\and Zhaoming Yuan\thanks{Department of Applied Mathematics, The Hong Kong Polytechnic University, Kowloon, Hong Kong. Department of Mathematics \& SUSTech International Center for Mathematics,
Southern University of Science and Technology, Shenzhen 518055, China.
(\texttt{zhaoming.yuan@connect.polyu.hk})}
\and Zhi Zhou\thanks{Department of Applied Mathematics, The Hong Kong Polytechnic University, Kowloon, Hong Kong.
(\texttt{zhizhou@polyu.edu.hk, zhizhou0125@gmail.com})}}
\date{\today}
\title{Robust Convergence of Parareal Algorithms with Arbitrarily High-order Fine Propagators
\thanks{\funding{The work of J.\ Yang is supported by National Natural Science Foundation of China (NSFC) Grant No. 11871264
and the research of Z.\ Yuan and Z.\ Zhou is partially supported by Hong Kong RGC grant (No. 15304420).}}}
\author{Jiang Yang\thanks{Department of Mathematics \& SUSTech International Center for Mathematics,
Southern University of Science and Technology, Shenzhen 518055, China.
(\texttt{yangj7sustech.edu.cn})}
\and Zhaoming Yuan\thanks{Department of Applied Mathematics, The Hong Kong Polytechnic University, Kowloon, Hong Kong. Department of Mathematics \& SUSTech International Center for Mathematics,
Southern University of Science and Technology, Shenzhen 518055, China.
(\texttt{zhaoming.yuan@connect.polyu.hk})}
\and Zhi Zhou\thanks{Department of Applied Mathematics, The Hong Kong Polytechnic University, Kowloon, Hong Kong.
(\texttt{zhizhou@polyu.edu.hk, zhizhou0125@gmail.com})}}
\date{\today}
\begin{document}
\maketitle

\begin{tcbverbatimwrite}{tmp_\jobname_abstract.tex}
\begin{abstract}
The aim of this paper is to analyze the robust convergence of a class of parareal algorithms for solving
parabolic problems. The coarse propagator is fixed to the backward Euler method
and the fine propagator is a high-order single step integrator.
Under some conditions on the fine propagator,
we show that there exists some critical $J_*$ such
that the parareal solver converges linearly with a convergence rate near $0.3$,
provided that the ratio between the coarse time step and fine
time step named $J$ satisfies $J \ge J_*$.
The convergence is robust even if the problem data is nonsmooth and incompatible with boundary conditions. The qualified methods include all absolutely stable single step methods,
whose stability function satisfies $|r(-\infty)|<1$,
and hence the fine propagator could be arbitrarily high-order. Moreover, we examine some popular high-order single step methods, e.g., two-, three- and four-stage Lobatto IIIC methods,
and verify that the corresponding parareal algorithms converge linearly with a factor $0.31$ and the threshold for these cases is $J_* = 2$.
Intensive numerical examples are presented to support and complete our theoretical predictions.
\end{abstract}

\begin{keywords}
parareal algorithm, parabolic problems, arbitrarily high-order,
single step integrator, convergence factor
\end{keywords}

\begin{AMS}
Primary 65M12, 65M60; Secondary 65L06
\end{AMS}
\end{tcbverbatimwrite}
\begin{abstract}
The aim of this paper is to analyze the robust convergence of a class of parareal algorithms for solving
parabolic problems. The coarse propagator is fixed to the backward Euler method
and the fine propagator is a high-order single step integrator.
Under some conditions on the fine propagator,
we show that there exists some critical $J_*$ such
that the parareal solver converges linearly with a convergence rate near $0.3$,
provided that the ratio between the coarse time step and fine
time step named $J$ satisfies $J \ge J_*$.
The convergence is robust even if the problem data is nonsmooth and incompatible with boundary conditions. The qualified methods include all absolutely stable single step methods,
whose stability function satisfies $|r(-\infty)|<1$,
and hence the fine propagator could be arbitrarily high-order. Moreover, we examine some popular high-order single step methods, e.g., two-, three- and four-stage Lobatto IIIC methods,
and verify that the corresponding parareal algorithms converge linearly with a factor $0.31$ and the threshold for these cases is $J_* = 2$.
Intensive numerical examples are presented to support and complete our theoretical predictions.
\end{abstract}

\begin{keywords}
parareal algorithm, parabolic problems, arbitrarily high-order,
single step integrator, convergence factor
\end{keywords}

\begin{AMS}
Primary 65M12, 65M60; Secondary 65L06
\end{AMS}






\section{Introduction}\label{sec:intro}
The main focus of this paper is to study the convergence of a class of parareal solver for the parabolic problems. Specifically,
we let $T >0, u^0\in H,$ and consider the initial value
problem of seeking $u \in C((0,T];D(A))\cap C([0,T];H)$ satisfying
\begin{equation}\label{eqn:pde}
\left \{
\begin{aligned}
&u' (t) + Au(t) = f(t), \quad 0<t<T,\\
& u(0)=u^0 ,
\end{aligned}
\right .
\end{equation}
where $A$ is a positive definite, selfadjoint, linear operator with a compact inverse, defined in
Hilbert space $(H, (\cdot , \cdot )) $ with domain $D(A)$ dense in $H$.
Here $u^0 \in H$ is a given initial condition
and $f  : [0,T] \to H$ is a given forcing term.
Throughout the paper, $\| \cdot \|$ denotes the norm of the space $H$.

Parallel-in-time (PinT) methods, dating back to the work of Nievergelt in
1964 \cite{Nievergelt:1964}, have attracted a lot of interest in the last several decades.
The parareal method, introduced in 2001 \cite{LionsMadayTurinici:2001}, is perhaps one of the most popular PinT algorithms.
This method is relatively simple to implement, and can be employed for any single step integrators.
In recent years, the parareal algorithm and some relevant algorithms, have been applied in many fields,
such as turbulent plasma \cite{Reynolds-Barredo:2012, Reynolds-Barredo:2013}, structural (fluid) dynamics \cite{CortialFarhat:2009, Farhat:2003},
molecular dynamics \cite{Baffico:2002}, optimal control \cite{MadaySalomonTurinici:2007, MathewSarkisSchaerer:2010}
Volterra integral equations and fractional models \cite{LiTangXu:2013, WuZhou:2017frac}, etc.
We refer the interested reader to
survey papers \cite{Gander:2015, OngSchroder:2020} and references
therein.

The parareal algorithm is defined by
using two time propagators, $\mathcal{G}$ and $\mathcal{F}$,  associated with the large step size $\Delta T$ and the small step size $\Delta t$ respectively, where we assume that the ratio $J = \Delta T/\Delta t$ is an integer greater than $1$. The fine time propagator $\mathcal{F}$ is operated with small step size $\Delta t$ in each coarse sub-interval parallelly, after which the coarse time propagator $\mathcal{G}$ is operated with
large step size $\Delta T$ sequentially for corrections. In general, the coarse propagator  $\mathcal{G}$  is assumed to be much cheaper than the fine propagator $\mathcal{F}$.
Therefore, throughout this paper, we fix $\mathcal{G}$ to the backward-Euler method and study the choices of $\mathcal{F}$.
Then a natural question  arises related to convergence of the parareal algorithm.  For parabolic type problems,
in the pioneer work \cite{Bal:2005}, Bal proved a fast convergence of the parareal method with a strongly stable coarse propagator
and the exact fine propagator,
provided some regularity assumptions on the problem data. The analysis works for both linear and nonlinear problems.
This convergence behavior
is clearly observed in numerical experiments,
see e.g. Figure \ref{fig:ex1-a}.
However, without those regularity assumptions, the convergence observed from the empirical experiments will be much slower than expected, cf. Figure \ref{fig:ex1-b}.

This interesting phenomenon motivates the current work, where we aim to study the convergence of parareal algorithm which is expected to be robust in the case of nonsmooth / incompatible problem data, that is related to various applications, e.g., optimal control, inverse problems, and stochastic models.
There have existed some case studies.
In \cite{MathewSarkisSchaerer:2010},
Mathew, Sarkis and Schaerer considered
the backward Euler method as the fine propagator  and proved
the robust convergence of the parareal algorithm with a convergence factor $0.298$ (for all $J \ge 2$); see also \cite{GanderVandewalle:2007,Staff:2005} for some related discussion. In \cite{Wu:2015}, Wu showed that the convergence factors
for the second-order diagonal implicit Runge--Kutta method
and a single step TR/BDF2 method (i.e., the ode23tb solver for ODEs in MATLAB)
are $0.316$ (with $J\ge 2$) and $0.333$ (with $J\ge 2$), respectively.
These error bounds might be slightly improved
by increasing $J_*$.
See also \cite{WuZhou:2015} for the analysis for a third-order diagonal implicit Runge--Kutta method with a convergence factor $0.333$ ($J_*=4$). For fourth-order Gauss--Runge--Kutta integrator, in \cite{WuZhou:2015} Wu and Zhou showed that the threshold depends on both the largest eigenvalue of operator $A$ and the step size $\Delta t$. Note that the eigenvalues of $A$ may approach infinity, e.g. $A=-\Delta$ with homogeneous boundary conditions. Therefore, this kind of integrators might not be suitable for the parareal algorithm.

Then a natural question arises: \textsl{\uline{in what case there exists a threshold $J_*>0$ (independent of step sizes $\Delta T$, $\Delta t$, terminal time $T$,
problem data $u^0$ and $f$,
as well as the distribution of spectrum of the elliptic
operator $A$), such that
for any $J\ge J_*$, the parareal algorithm for solving the parabolic equation \eqref{eqn:pde} converges robustly}}?
Our study provides a positive answer to this question: if the fine propagator is strongly stable, in sense that the stability function satisfies $|r(-\infty)| \in [0,1)$, then
there must exist such a positive threshold $J_*$ so that for all $J\ge J_*$
the parareal algorithm converges linearly with convergence factor close to $0.3$.
The convergence is robust even if the initial data is nonsmooth or incompatible with boundary conditions.
Noting that all L-stable Runge--Kutta schemes satisfy that condition, so the fine propagator  can be arbitrarily high-order.
As examples, we analyzed three popular L-stable schemes, i.e., two-, three-, four-stage Lobatto IIIC schemes. We show that for all these cases the parareal algorithm converges linearly with factor less than $0.31$
and $J_*=2$.
Our theoretical results are fully supported by numerical experiments.

The rest of the paper is organized as follows. In Section \ref{sec:parareal}, we introduce singe step integrators
and parareal algorithms for solving the parabolic problem.
Then we show the convergence of the algorithm in Section  \ref{sec:convergence}
by using the spectrum decomposition.
Moreover, in Section \ref{sec:RK}, we present case studies on three popular L-stable Runge--Kutta schemes, and show a sharper estimate for
the threshold $J_*$. Finally, in Section \ref{sec:numerics}, we present some numerical results to illustrate and complement the theoretical analysis.

\section{Single step methods and parareal algorithm}\label{sec:parareal}
In this section, we present the basic setting of the single step time stepping methods for solving the parabolic equation \eqref{eqn:pde}
and the parareal algorithm. See more detailed discussion in the monograph \cite[Chapter 7-9]{Thomee:2006}
and the comprehensive survey paper \cite{Gander:2015}.

\subsection{Single step integrators for solving parabolic equations}
To begin with, we consider the time discretization for the parabolic equation \eqref{eqn:pde}.
We split the interval $(0,T)$ into $N$ subintervals with the uniform mesh size ${\Delta t}=T/N$,
and set $t_n = n{\Delta t}$, $n=0,1,\ldots,N$.
Then a framework of a single step scheme approximating $u(t_n)$ reads:
\begin{equation}\label{eqn:semi}
  u^{n+1} = r(-{\Delta t} A) u^{n} + {\Delta t} \sum_{i=1}^m p_i(-{\Delta t} A)
  f(t_n+c_i\Delta t) ,\quad \text{for all}~0\le n\le N-1,
\end{equation}
Here, $r(\lambda)$ and $\{p_i(\lambda)\}^m_{i=1}$ are rational functions and $c_i$ are distinct real numbers in $[0,1]$.
Throughout the paper, we assume that the scheme \eqref{eqn:semi} satisfies the following assumptions.\vskip10pt

\begin{itemize}
\item[{\bf (P1)}] $| r (-\lambda)|< 1$ and $|p_i(-\lambda)|\le c$, for all $i=1,\ldots,m$,
uniformly in ${\Delta t}$ and $\lambda > 0$. Besides, the numerator of $p_i(\lambda)$
is of lower degree than its denominator.
\item[{\bf (P2)}] The time stepping scheme \eqref{eqn:semi}  is \textsl{accurate} of order $q$ in sense that
$$  r (-\lambda) = e^{-\lambda} + O(\lambda^{q+1}),\quad \text{as}~\lambda\rightarrow0. $$
and for $0\le j\le q$
$$ \sum_{i=1}^m c_i^j p_i(-\lambda) - \frac{j!}{(-\lambda)^{j+1}}\Big(e^{-\lambda}
- \sum_{\ell=0}^j \frac{(-\lambda)^\ell}{\ell!}\Big) = O(\lambda^{q-j}),\quad \text{as}~\lambda\rightarrow0. $$
\item[{\bf (P3)}] The rational function $r(\lambda)$ is \textsl{strongly stable} in sense that $|r(-\infty)|<1$.
\end{itemize}\vskip5pt

\begin{remark}
Condition (P3) is essential for the convergence of parareal iteration.
If $|r(\infty)| = 1$, e.g., Crank-Nicolson method and implicit Runge-Kutta methods of Gauss type,
the parareal method converges only if the eigenvalues of $A$ is bounded from above
(which is not true for parabolic equations)
and the ratio between the coarse step size and the fine step size is sufficiently large (depending on the upper bound of eigenvalues of $A$). Besides, this condition is also important in case that problem data is nonsmooth, e.g., $u^0\in H$. Time stepping schemes violating this condition may lose the optimal convergence rate in the nonsmooth data case \cite[Chapter 8]{Thomee:2006}.
\end{remark}

Practically, it is convenient to choose $p_i(\lambda)$ that share the same denominator of $ r (\lambda)$:
$$ r (\lambda) = \frac{a_0(\lambda)}{g(\lambda)},\quad \text{and} \quad p_i(\lambda) = \frac{a_i(\lambda)}{g(\lambda)},\quad \text{for}~i=1,2,\ldots,m,$$
where $a_i(\lambda)$ and $g(\lambda)$ are polynomials.
Then the integrator \eqref{eqn:semi} could be written as
\begin{equation*}
g(-{\Delta t} A) u^{n+1} = a_0(-{\Delta t} A) u^{n} + {\Delta t} \sum_{i=1}^m a_i(-{\Delta t} A) f(t_n+c_i\Delta t) ,\quad \text{for all}~1 \le n\le N.
\end{equation*}
See e.g. \cite[pp. 131]{Thomee:2006} for the construction of such rational functions satisfying (P1)-(P3).\vskip5pt

Under those conditions, there holds the following error estimate for the time stepping scheme \eqref{eqn:semi}.
The proof is given in \cite[Theorems 7.2 and 8.3]{Thomee:2006}.
\begin{lemma}\label{lem:conv-00}
Suppose that the Conditions (P1)-(P3) are fulfilled.
Let $u(t)$ be the solution to parabolic equation \eqref{eqn:pde}, and $u^n$
be the solution to the time stepping scheme \eqref{eqn:semi}. Then there holds
\begin{equation*}
\|u^n - u(t_n)\|\leq
c\, (\Delta t)^q \Big( t_n^{-q} \| u^0 \|
+ t_n \sum_{\ell=0}^{q-1} \sup_{s\le t_n} \| A^{q-\ell}f^{(\ell)}(s)  \| + \int_0^{t_n}
\| f^{(q)}(s) \|\,\d s\Big),
\end{equation*}
provided that $v \in H$, $f^{(\ell)}\in C([0,T];\text{Dom}(A^{q-\ell})$ with $0\le \ell\le q-1$ and $f^{(q)} \in L^1(0,T; H)$ for all $\ell<q$.
\end{lemma}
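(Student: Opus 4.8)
The plan is to exploit the spectral decomposition of $A$ to reduce the operator estimate to a family of scalar estimates, one per eigenmode, and then to treat the contributions of the initial data $u^0$ and of the forcing $f$ separately by linearity. Since $A$ is selfadjoint, positive definite, with compact inverse, it admits an orthonormal eigenbasis $\{\phi_j\}$ with eigenvalues $0<\lambda_1\le\lambda_2\le\cdots\to\infty$. Writing $u^n=\sum_j u_j^n\phi_j$ and $f(t)=\sum_j f_j(t)\phi_j$, the scheme \eqref{eqn:semi} decouples into the scalar recursions $u_j^{n+1}=r(-\mu_j)u_j^n+\Delta t\sum_i p_i(-\mu_j)f_j(t_n+c_i\Delta t)$ with $\mu_j=\lambda_j\Delta t$, whose exact counterpart is $u_j(t)=e^{-\lambda_j t}u_j^0+\int_0^t e^{-\lambda_j(t-s)}f_j(s)\,\d s$. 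By Parseval's identity the full error is the $\ell^2$-sum over $j$ of the modal errors, so it suffices to bound each mode uniformly in $\mu_j>0$ and $n\ge 1$.

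For the homogeneous part ($f\equiv 0$) the modal error is $(r(-\mu)^n-e^{-n\mu})u_j^0$, and the central step is the pointwise kernel estimate
\begin{equation*}
|r(-\mu)^n-e^{-n\mu}|\le c\,n^{-q},\qquad \text{uniformly in } \mu>0,\ n\ge 1.
\end{equation*}
I would prove this through the telescoping identity $a^n-b^n=\sum_{k=0}^{n-1}a^k(a-b)b^{n-1-k}$ with $a=r(-\mu)$, $b=e^{-\mu}$, splitting the $\mu$-range. For small $\mu$ the consistency $r(-\mu)=e^{-\mu}+O(\mu^{q+1})$ from (P2), together with $|r(-\mu)|\le e^{-c\mu}$, gives $|r(-\mu)^n-e^{-n\mu}|\lesssim n\mu^{q+1}e^{-cn\mu}=n^{-q}(n\mu)^{q+1}e^{-cn\mu}\lesssim n^{-q}$, using boundedness of $x^{q+1}e^{-cx}$; for large $\mu$ the strong stability (P3) yields $|r(-\mu)|\le\rho<1$, hence $|r(-\mu)^n|\le\rho^n\lesssim n^{-q}$ while $e^{-n\mu}$ is negligible. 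Summing over $j$ via Parseval and recalling $n^{-q}=(\Delta t)^q t_n^{-q}$ produces the first term $c(\Delta t)^q t_n^{-q}\|u^0\|$.

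For the inhomogeneous part ($u^0=0$) I would write the numerical solution as the discrete convolution $u^n=\Delta t\sum_{k=0}^{n-1} r(-\Delta t A)^{n-1-k}\sum_i p_i(-\Delta t A) f(t_k+c_i\Delta t)$ and compare it to the Duhamel integral $u(t_n)=\int_0^{t_n}e^{-(t_n-s)A}f(s)\,\d s$. On each mode this amounts to comparing a quadrature of $\int e^{-\lambda_j(t_n-s)}f_j(s)\,\d s$ against the exact integral, and the moment conditions on $\sum_i c_i^j p_i(-\mu)$ in (P2) are precisely what make this quadrature accurate of order $q$. Expanding $f$ in a Taylor polynomial of degree $q$ about the subinterval nodes and resumming the geometric kernel by summation by parts, the leading contributions reproduce the exact integral, the remainder is controlled by $\int_0^{t_n}\|f^{(q)}(s)\|\,\d s$, and the endpoint terms generated by the expansions carry factors $A^{q-\ell}$ acting on $f^{(\ell)}$, which—after invoking the uniform bounds $|p_i(-\mu)|\le c$ from (P1) and $\mu^{q-\ell}e^{-cn\mu}\lesssim 1$—assemble into $t_n\sum_{\ell=0}^{q-1}\sup_{s\le t_n}\|A^{q-\ell}f^{(\ell)}(s)\|$, the factor $t_n$ arising from accumulating $n=t_n/\Delta t$ subinterval contributions each of size $O(\Delta t)$.

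The main obstacle is this inhomogeneous estimate. The smoothing bound for the initial data is a clean, self-contained kernel inequality, but matching the discrete Duhamel sum to the continuous convolution to the full order $q$—while producing exactly the mixed norms $\|A^{q-\ell}f^{(\ell)}\|$ and remaining robust for data that is merely $f^{(\ell)}\in C([0,T];\Dom(A^{q-\ell}))$ and incompatible with the boundary conditions—is delicate, since $A$ is unbounded and the endpoint terms must be absorbed using strong stability rather than any spectral cut-off. The detailed bookkeeping of these terms is carried out in \cite[Theorems 7.2 and 8.3]{Thomee:2006}, whose argument I would follow.
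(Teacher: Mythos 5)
Your proposal is correct and follows essentially the same route as the paper, which offers no independent argument but simply cites \cite[Theorems 7.2 and 8.3]{Thomee:2006}; your sketch is a faithful outline of precisely those two theorems (spectral/operator-calculus reduction, the smoothing kernel bound $|r(-\mu)^n-e^{-n\mu}|\le c\,n^{-q}$ for the homogeneous part, and the quadrature comparison with the Duhamel integral for the forcing), and you defer the same bookkeeping to the same source.
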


\begin{remark}\label{rem:conv-00}
Lemma \ref{lem:conv-00} indicates that, under Conditions (P1)-(P3), the
solution of the time stepping scheme \eqref{eqn:semi} converges to the exact solution with order $q$
provided that the source term $f$ and initial condition $u_0$
satisfy certain compatibility conditions.
For example, if we consider the parabolic equation where  $A=-\Delta$ with homogeneous Dirichlet boundary condition,
it requires
$(-\Delta)^{\ell} f^{(q-\ell)}= 0$ on the boundary $\partial\Omega$ for $0\le \ell\le q$.
In order to avoid the restrictive compatibility conditions, we shall assume that the time discretization scheme \eqref{eqn:semi} is \textsl{strictly accurate} of order $q$ in sense that
$$ \sum_{i=1}^m c_i^j p_i(-\lambda) - \frac{j!}{(-\lambda)^{j+1}}\Big( r (-\lambda)
- \sum_{\ell=0}^j \frac{(-\lambda)^\ell}{\ell!}\Big) =0,\quad\text{for all}~ 0\le j\le q-1. $$
It is well-known that a single step method
with a given $m \in \mathbb{Z}^+$ could be accurate of order $2m$ (Gauss--Legendre method) \cite[Section 2.2]{Ehle:thesis},
but at most strictly accurate of order $m+1$ \cite[Lemma 5]{BrennerCrouzeixThomee:1982}.
\end{remark}

\begin{remark}\label{rem:conv-01}
The error estimate in Lemma \ref{lem:conv-00} could be slightly improved if the time integrator is L-stable, i.e. $r(-\infty) =0$; see e.g., \cite[Theorem 7.2]{Thomee:2006}.
\end{remark}








\subsection{Parareal algorithm}
Next, we state the parareal solver for the single step scheme \eqref{eqn:semi}.
Let $\Delta T=J \Delta t$, with a positive integer $J\ge2$, be the coarse step size.
Without loss of generality, we assume that $N_c = T/\Delta T$ is an integer, and let $T_n = n\Delta T$
Then, two numerical propagators $\G$ and $\F$ are assigned to the coarse and fine time grids,
where $\G$ is usually a low-order and inexpensive numerical method (such as backward Euler scheme), and $\F$
is given by the single step integrator \eqref{eqn:semi}. Specifically, for $v\in H$ and $f\in C([0,T];H)$, letting
$I$ denote the identity operator,
we define the coarse
and finer propagator as
\begin{equation*}
\G(T_n, \Delta T, v, f) = (I + \Delta T A)^{-1} (v + \Delta T f(T_n)).
\end{equation*}
and
\begin{equation*}
\F(t_n, \Delta t, v, f) =   r(-{\Delta t} A) v + {\Delta t} \sum_{i=1}^m p_i(-{\Delta t} A)  f(t_{n}+c_i\Delta t).
\end{equation*}
respectively. Then, the parareal solver is described in Algorithm \ref{alg:para}.

\begin{algorithm}[ht]
  \center
\caption{Parareal solver for the single step scheme \eqref{eqn:semi}.}
  \begin{algorithmic}[1]\label{alg:para}
    \STATE \textbf{Initialization}:
    Compute
    $U_0^{n+1} = \G(T_n,\Delta T, U_0^{n+1}, f)$ with $U_0^0 =u^0$, $n =0,1,...,N_c - 1$;
    \FOR {$k=0,1,\ldots,K$}
    \STATE \textbf{Step 1}: On each subinterval $[T_n,T_n+1]$, sequetially compute for $j=1,2,\dots,J-1$
    $$  \widetilde U^{n, j+1} = \F(T_n+j \Delta t, \Delta t, \widetilde U^{n, j},f), $$
    with initial value $ \widetilde U^{n, 0} = U_k^n$.
    Let $\widetilde U^{n+1} =  \widetilde U^{n, J}$.\vskip5pt
    \STATE \textbf{Step 2}: Perform sequential corrections: find $U_{k+1}^{n+1}$
    $$  U_{k+1}^{n+1} = \G(T_n, \Delta T, U_{k+1}^{n}, f)  + \widetilde U^{n+1} -   \G(T_n, \Delta T, U_{k}^{n}, f) $$
   with $U_{k+1}^0 = u^0$, for $n =  0,1,...,N_c - 1$;\vskip5pt
    \STATE \textbf{Step 3}:  If $\{ U_{k+1}^n \}_{n=1}^{N_c}$ satisfies the stopping criterion, terminate the iteration; otherwise go to {\bf Step 1}.
    \ENDFOR
  \end{algorithmic}
\end{algorithm}

The aim of this paper is to show that the iterative solution $U_k^n$, generated by the parareal algorithm,
linearly converges to the exact time stepping solution $u^{nJ}$ of the single step integrator \eqref{eqn:semi} with fine time step $\Delta t$, i.e.,
\begin{equation}\label{eqn:conv-1}
\max_{1 \le n\le N}\|  U_{k}^n - u^{nJ}  \| \le  c \,\gamma^k,
\end{equation}
with some convergence factor $\gamma$ strictly smaller than $1$. We shall prove that
 there exists a positive threshold $J_*$,
independent of $\Delta T$, $\Delta t$ and the upper bound of spectrum of $A$, such that
if $J\ge J_*$, then \eqref{eqn:conv-1} is true with $\gamma$ close to $0.3$, under conditions (P1)-(P3).

\section{Convergence analysis}\label{sec:convergence}

Next, we briefly test the convergence factor of parareal iteration.
Taking comparision with the exact time stepping solution in \eqref{eqn:semi}, we arrive at
\begin{equation*}
\begin{aligned}
    U_{k+1}^{n+1} - u^{(n+1)J} &= (I+\Delta TA)^{-1}\Big[(U_{k+1}^{n} - u^{nJ})
- (U_{k}^{n} - u^{nJ})\Big] \\
& \quad + F(T_n+(J-1)\Delta t, \Delta t, \widetilde U^{n,J-1}, f)
- F(T_n+(J-1)\Delta t, \Delta t, u^{nJ-1}, f)) \\
&= (I+\Delta TA)^{-1}\Big[(U_{k+1}^{n} - u^{nJ})
- (U_{k}^{n} - u^{nJ})\Big]
+ r(-\Delta t A) (\widetilde U^{n,J-1}- u^{(n+1)J-1})\\
&=\cdots\\
&= (I+\Delta TA)^{-1}\Big[(U_{k+1}^{n} - u^{nJ})
- (U_{k}^{n} - u^{nJ})\Big]
+ r(-\Delta t A)^J (\widetilde U^{n,0}- u^{nJ})\\
&= (I+\Delta TA)^{-1}\Big[(U_{k+1}^{n} - u^{nJ})
- (U_{k}^{n} - u^{nJ})\Big]
+ r(-\Delta t A)^J ( U_k^{n}- u^{nJ}),
\end{aligned}
\end{equation*}
For the sake of simplicity, we define $E_{k}^{n} = U_k^n - u^{nJ}$ and rewrite the above equation as
$$E_{k+1}^{n+1} = (I+\Delta TA)^{-1} (E_{k+1}^n - E_{k}^{n})
+  r(-\Delta t A)^J E_{k}^{n}
$$

Recall that the operator $A$ is a positive definite, selfadjoint, linear operator with a compact inverse,
defined in Hilbert space $(H, (\cdot , \cdot ))$. Then by the spectral theory,
$A$ has positive eigenvalues $\{\lambda_j\}_{j=1}^\infty$, where $0<\lambda_1\le \lambda_2 \le \ldots $ and $\lambda_j\rightarrow\infty$,
and the corresponding eigenfunctions $\{\phi_j\}_{j=1}^\infty$ form an orthonormal basis of the Hilbert space $H$.
Then, letting $e_{k,j}^n = (E_{k}^{n}, \phi_j)$, by means of spectrum decomposition,
we derive
\begin{equation*}
\begin{aligned}
    e_{k+1,j}^{n+1} = \frac{e_{k+1,j}^n-e_{k,j}^n}{1+\Delta T \lambda_j} +  r(-\Delta t \lambda_j)^J e_{k,j}^{n}.
\end{aligned}
\end{equation*}
By letting $d_j = \Delta T \lambda_j$, we have
\begin{equation*}
\begin{aligned}
    e_{k+1,j}^{n+1} \le (1+d_j)^{-1} e_{k+1,j}^n +  (r(-d_j/J)^J - (1+d_j)^{-1})e_{k,j}^{n}.
\end{aligned}
\end{equation*}
We apply the recursion and use the fact that $ e_{k+1,j}^0=0$, and hence obtain 
\begin{equation*}
\begin{aligned}e_{k+1,j}^{n+1}
=&(r(-d_j/J)^J - (1+d_j)^{-1}) e_{k,j}^n
    + (1+d_j)^{-1} e_{k+1,j}^n\\
=&(r(-d_j/J)^J - (1+d_j)^{-1}) \Big(e_{k,j}^n
    + (1+d_j)^{-1} e_{k,j}^{n-1} \Big)
    +  (1+d_j)^{-2}e_{k+1,j}^{n-1}\\
=&\dots \\
=&(r(-d_j/J)^J - (1+d_j)^{-1})
\Big( e_{k,j}^n + (1+d_j)^{-1} e_{k,j}^{n-1} + \dots +
(1+d_j)^{-(n-1)}e_{k,j}^{1}\Big).
 \end{aligned}
\end{equation*}
Now taking the absolute value on the both sides yields
 \begin{equation}\label{eqn:factor-0}
 \begin{aligned}|e_{k+1,j}^{n+1} | \leq &|r(-d_j/J)^J - (1+d_j)^{-1}| \cdot
(1 + (1+d_j)^{-1} + \dots + (1+d_j)^{-(n-1)})
\max_{1\le n\le N}{|e_{k,j}^{n}|} \\
\leq & \frac{|r(-d_j/J)^J -  (1+d_j)^{-1}|}{1-(1+d_j)^{-1}}\max_{1\le n\le N}{|e_{k,j}^{n}|}  \\
=& \left|\frac{(1+d_j) r(-d_j/J)^J-1}{d_j}\right|\max_{1\le n\le N}{|e_{k,j}^{n}|} \\
\le & \sup_{s\in(0,\infty)} \left|\frac{(1+s) r(-s/J)^J-1}{s}\right|\max_{1\le n\le N}{|e_{k,j}^{n}|}.
\end{aligned}
\end{equation}
If the the leading factor is strictly smaller than one, i.e.,
 \begin{equation}\label{eqn:conv-factor}
   \sup_{s\in(0,\infty)} \left|\frac{(1+s) r(-s/J)^J-1}{s}\right| \le \gamma < 1,
\end{equation}
then $e_{k}^{n}$ converges to zero linearly with a factor (smaller than) $\gamma$, and hence
the parareal iteration converges linearly to the time stepping solution \eqref{eqn:semi}
in sense of \eqref{eqn:conv-1}.



Our convergence analysis in this and next sections heavily depend on the constant $\kappa_\alpha$ defined by
\begin{equation}\label{eqn:kappa-alpha}
\kappa_\alpha := \sup_{s\in(0,\infty)}\left|\frac{(1+s)e^{-\alpha s}-1 }{s}\right|,\qquad \text{for} ~~\alpha\in[0,2].
\end{equation}
To begin with, we establish an simple upper bound for the constant $\kappa_\alpha$.

\begin{lemma}\label{lem:exp}
Let $\alpha\in[0,2]$, and $\kappa_\alpha$ be the constant defined in \eqref{eqn:kappa-alpha}.
Then there holds
\begin{equation*}
\begin{split}
\kappa_\alpha \le  \begin{cases}
  e^{\alpha-2} ,\quad &\alpha\in[1,2];\\
  \max(e^{\alpha-2}, 1-\alpha),&\alpha\in[0,1).
\end{cases}
\end{split}
\end{equation*}
\end{lemma}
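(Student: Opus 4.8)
The plan is to reduce the two-sided supremum defining $\kappa_\alpha$ to a one-variable maximization of a derivative via the mean value theorem. I would set $\phi(s) = (1+s)e^{-\alpha s}$, so that $\phi(0) = 1$ and the expression inside the supremum is exactly the difference quotient of $\phi$ at $0$:
$$\frac{(1+s)e^{-\alpha s}-1}{s} = \frac{\phi(s)-\phi(0)}{s-0}.$$
By the mean value theorem, for each fixed $s>0$ there is some $\xi\in(0,s)$ with this quotient equal to $\phi'(\xi)$. Taking absolute values and then the supremum over $s$ gives
$$\kappa_\alpha \le \sup_{\xi\ge 0}\bigl|\phi'(\xi)\bigr|,$$
so it suffices to bound $\phi'$ uniformly on $[0,\infty)$.

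A direct computation yields $\phi'(\xi) = e^{-\alpha\xi}\bigl(1-\alpha-\alpha\xi\bigr) =: \psi(\xi)$, and I would study $\psi$ on $[0,\infty)$. Its endpoint values are $\psi(0) = 1-\alpha$ and $\psi(\xi)\to 0$ as $\xi\to\infty$. Differentiating once more gives $\psi'(\xi) = \alpha e^{-\alpha\xi}\bigl(\alpha(1+\xi)-2\bigr)$, so for $\alpha\in(0,2]$ the unique interior critical point is $\xi_* = 2/\alpha - 1 \ge 0$, which is a minimum of $\psi$. Using $\alpha\xi_* = 2-\alpha$, one finds $1-\alpha-\alpha\xi_* = -1$ and $e^{-\alpha\xi_*} = e^{\alpha-2}$, so $\psi(\xi_*) = -e^{\alpha-2}$.

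Since $\psi$ therefore decreases monotonically from $1-\alpha$ down to $-e^{\alpha-2}$ and then increases back to $0$, its range of values has extremes exactly $1-\alpha$ and $-e^{\alpha-2}$, whence $\sup_{\xi\ge 0}|\psi(\xi)| = \max\bigl(|1-\alpha|,\, e^{\alpha-2}\bigr)$. This already establishes the bound for $\alpha\in[0,1)$. For $\alpha\in[1,2]$ there remains the elementary comparison $|1-\alpha| = \alpha-1 \le e^{\alpha-2}$, which I would verify by checking that $g(\alpha) := e^{\alpha-2}-(\alpha-1)$ satisfies $g(2)=0$ and $g'(\alpha) = e^{\alpha-2}-1\le 0$ on $[1,2]$, hence $g\ge 0$ there; the maximum then collapses to $e^{\alpha-2}$. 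The degenerate case $\alpha=0$ (where $\psi\equiv 1$) is consistent and yields $\kappa_0\le 1 = \max(1,e^{-2})$.

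The only mildly delicate part is the sign and monotonicity bookkeeping for $\psi$ combined with the scalar comparison $\alpha-1\le e^{\alpha-2}$ on $[1,2]$; everything else is routine once the mean value theorem has replaced the difference quotient by $\phi'$, turning the problem into a single-variable extremum analysis.
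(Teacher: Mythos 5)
Your proof is correct. The argument is genuinely organized differently from the paper's, though it rests on the same computation. The paper never invokes the mean value theorem: it bounds the difference quotient from below by showing that the auxiliary function $g(s)=(1+s)e^{-\alpha s}+e^{\alpha-2}s$ is increasing (so $g(s)\ge g(0)=1$), handles the upper bound for $\alpha\ge 1$ by the separate elementary inequality $1+s\le e^{\alpha s}$ (which gives $(1+s)e^{-\alpha s}-1\le 0$ outright), and for $\alpha<1$ repeats the monotonicity argument with $g(s)=(1+s)e^{-\alpha s}-\kappa_\alpha^* s$. Your MVT reduction replaces all of this with a single extremum analysis of $\phi'(\xi)=e^{-\alpha\xi}(1-\alpha-\alpha\xi)$; note that the paper's key computation, the unique root of $g''$ at $(2-\alpha)/\alpha$ with $g'$ attaining the value $0$ there, is exactly your critical point $\xi_*=2/\alpha-1$ with $\phi'(\xi_*)=-e^{\alpha-2}$, so the two proofs are dual to one another (the paper integrates the derivative bound via monotonicity; you extract it via MVT). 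What your route buys is a unified treatment of both cases and of both signs of the quotient at once; the small price is the extra scalar comparison $\alpha-1\le e^{\alpha-2}$ on $[1,2]$, which you verify correctly and which in fact also follows directly from $\psi$ decreasing from $1-\alpha$ to $-e^{\alpha-2}$. The only cosmetic imprecision is the phrase that the range of $\psi$ has ``extremes exactly $1-\alpha$ and $-e^{\alpha-2}$,'' which is literally true only for $\alpha\le 1$ (for $\alpha>1$ the supremum of $\psi$ is $0$, approached at infinity), but this does not affect the conclusion $\sup_{\xi\ge0}|\psi(\xi)|=\max(|1-\alpha|,e^{\alpha-2})$.
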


\begin{proof}
First of all, we show the claim that
\begin{equation}\label{eqn:exp-01}
\frac{(1+s)e^{-\alpha s}-1 }{s}\geq -e^{\alpha-2}.
\end{equation} To this end, we define the auxiliary function
$$g(s) = (1+s)e^{-\alpha s} + e^{\alpha-2}s.$$
Then a simple computation yields
$$g'(s) = (1-\alpha-\alpha s)e^{-\alpha s} + e^{\alpha-2} \quad \text{and}\quad g''(s) =(\alpha^2 + \alpha^2 s - 2\alpha )e^{-\alpha s}.$$
It is easy to observe that $g''(s)$ admits a single root at $s=(2-\alpha)/\alpha$,
and $$g'(x) \geq g'((2-\alpha)/\alpha) = 0.$$
Therefore $g(s)$ is increasing in $[0,\infty)$.
As a result, $g(s) \geq g(0) = 1$, and hence
$${(1+s)e^{-\alpha s}-1 }\geq -e^{\alpha-2}{s}\qquad \forall ~~s\geq0,$$
which implies \eqref{eqn:exp-01}.
Moreover, for $\alpha \geq 1$, we observe that
$$1+s \leq e^s \leq e^{\alpha s}\qquad \forall s\ge0,$$
which immediately leads to $(1+s)e^{-\alpha s}-1 \leq 0$ for all $s\ge0$.
This completes the proof for the desired results in case that $\alpha\in[1,2]$.

Now we turn to the case that
 $\alpha \in[0,1)$. Let $\kappa_\alpha^* = \max\{e^{\alpha-2}, 1-\alpha\}$
 and define $$g(x) = (1+s) e^{-\alpha s}- \kappa_\alpha^*s.$$
Then the  simple  computation yields
$$g'(s) = (1-\alpha - \alpha s) e^{-\alpha s}- \kappa_\alpha^*  \quad \text{and}\quad
 g''(s) = (\alpha^2 + \alpha^2 s - 2\alpha) e^{-\alpha s}.$$
Noting that
$g'(0) = 1-\alpha- \kappa_\alpha^*\le 0$, $g'(\infty) = - \kappa_\alpha^*<0$ and $g'((2-\alpha)/\alpha) \le 0$.
These imply  $g'(s) \leq 0$ and hence $g$ is decreasing function in $[0,\infty)$.
Therefore $g(s) \leq g(0) = 1$, which further implies
$$(1+s) e^{-\alpha s}- \kappa_\alpha^* s \leq 1. $$
This leads to the desired assertion for the case that  $\alpha \in[0,1)$.
\end{proof}

Lemma \ref{lem:exp} only provides a rough upper bound for $\kappa_\alpha$. In fact, for a fixed $\alpha$
we can further improve the upper bound via a more careful computation. In Figure \ref{fig:ka-alpha}, we numerically compute the constant $\kappa_\alpha$
for $\alpha\in[0,2]$ and plot those values.

\begin{figure} [tbhp]
\centering
\includegraphics[width=0.6\textwidth]{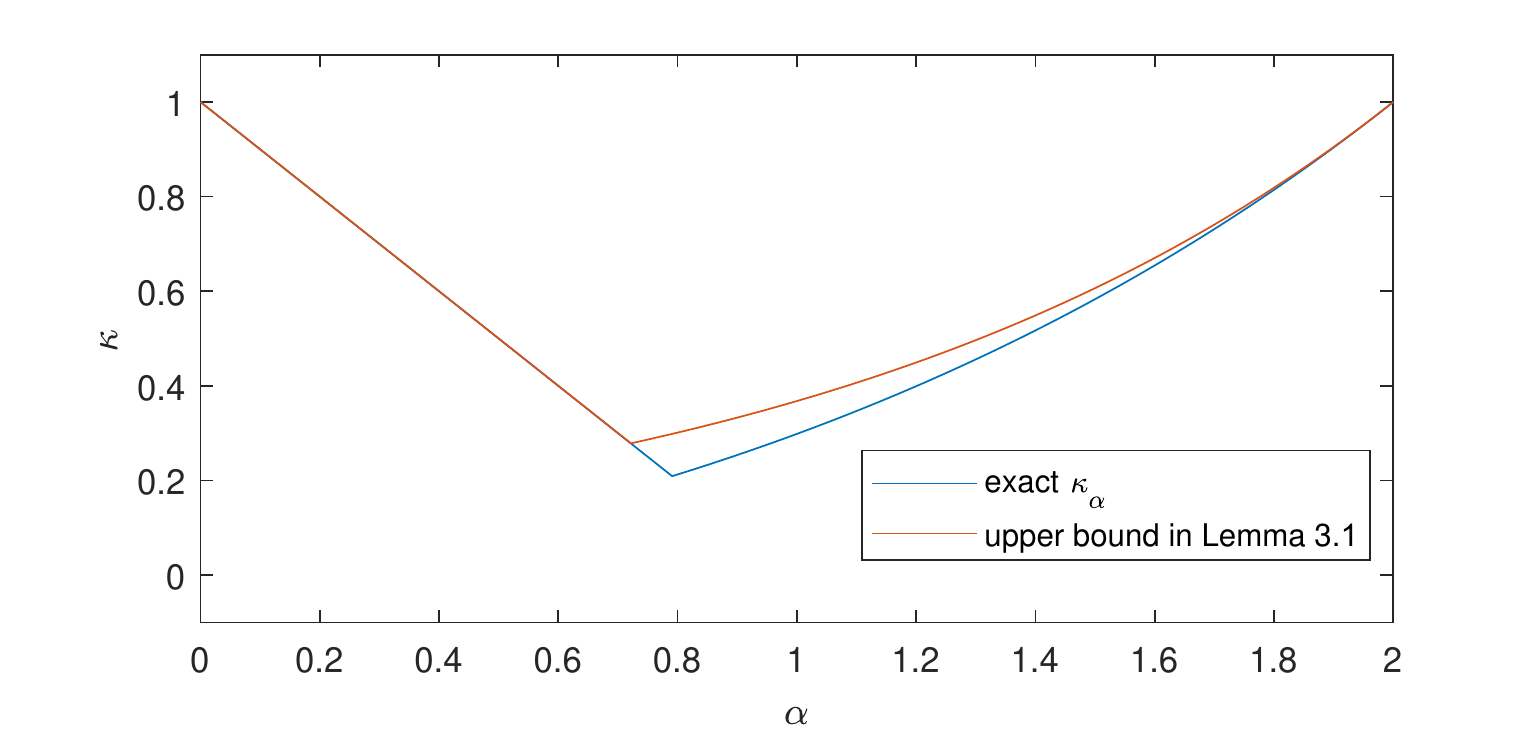}
   \caption{Plot of $\kappa_\alpha$ defined in lemma \ref{lem:exp}.}
   \label{fig:ka-alpha}
\end{figure}

The next lemma provides a sharper estimate for $\kappa_1$.

\begin{lemma}\label{lem:exp-1}
Let $\kappa_\alpha$ be the constant defined in \eqref{eqn:kappa-alpha}. Then $\kappa_1\approx 0.2984$.
\end{lemma}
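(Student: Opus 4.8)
The plan is to strip the absolute value, reduce the supremum to a single transcendental equation, and then read off a closed form for $\kappa_1$ at the maximizer. First I would specialize $\alpha=1$ and invoke the elementary bound $1+s\le e^s$ (already used in the proof of Lemma~\ref{lem:exp} for $\alpha\ge 1$) to conclude that $(1+s)e^{-s}-1\le 0$ for all $s\ge 0$. Hence the modulus may be removed with a sign change, and
\[
\kappa_1=\sup_{s\in(0,\infty)}\phi(s),\qquad \phi(s):=\frac{1-(1+s)e^{-s}}{s}.
\]
Since $\phi(s)\to 0$ as $s\to 0^+$ (a Taylor expansion gives $\phi(s)\sim s/2$) and $\phi(s)\to 0$ as $s\to\infty$, the supremum is attained at an interior critical point.

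Next I would differentiate. Writing $N(s)=1-(1+s)e^{-s}$, a short computation gives $N'(s)=s\,e^{-s}$, so that
\[
\phi'(s)=\frac{s^2e^{-s}-N(s)}{s^2}=\frac{e^{-s}(s^2+s+1)-1}{s^2}=-\frac{e^{-s}}{s^2}\,\psi(s),\qquad \psi(s):=e^s-(s^2+s+1).
\]
The crux is to show $\psi$ has a \emph{unique} positive zero $s_*$. This follows from a sign analysis of the derivatives: $\psi(0)=\psi'(0)=0$, while $\psi''(s)=e^s-2$ vanishes exactly once (at $s=\ln 2$), so $\psi'$ is first decreasing then increasing. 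Combined with $\psi'(0)=0$ this forces $\psi'<0$ on an interval $(0,s_1)$ and $\psi'>0$ afterwards; hence $\psi$ decreases to a negative value and then increases to $+\infty$, crossing zero exactly once at some $s_*$. Consequently $\phi'>0$ on $(0,s_*)$ and $\phi'<0$ on $(s_*,\infty)$, so $\phi$ has a unique global maximum at $s_*$.

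Finally I would exploit the defining relation $e^{s_*}=s_*^2+s_*+1$, i.e. $e^{-s_*}=1/(s_*^2+s_*+1)$, to obtain the pleasant closed form
\[
\kappa_1=\phi(s_*)=\frac{s_*}{\,s_*^2+s_*+1\,}=s_*\,e^{-s_*},
\]
after which solving $e^{s}=s^2+s+1$ numerically (Newton's iteration started at $s=1.8$ converges in a few steps) yields $s_*\approx 1.7933$ and therefore $\kappa_1=s_*e^{-s_*}\approx 0.2984$. The main obstacle is the monotonicity/uniqueness argument for $\psi$ that pins down a single interior maximizer and rules out spurious critical points; once that is secured, the algebraic simplification at $s_*$ and the root-finding are routine.
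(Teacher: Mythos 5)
Your proposal is correct and follows essentially the same route as the paper: both reduce the problem to locating the unique positive root of $s^2+s+1-e^s$ (your $\psi$ is the negative of the paper's $\varphi$) and then evaluate numerically. You additionally supply the derivative sign analysis proving uniqueness of that root (which the paper merely asserts as "easy to verify") and the tidy closed form $\kappa_1=s_*e^{-s_*}$, both of which are correct.
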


\begin{proof}
To show the sharp estimate, we note that for $g(s) = 1-(1+s)e^{- s}$, there holds
$g'(s) = se^{-s} \ge 0$ and $g(0) = 1$. Therefore $g(s)\ge0$ for all $s\in(0,\infty)$.
This further implies $\psi(s)=(1-(1+s)e^{- s})/s \ge 0$ for $s\in (0,\infty)$.

Meanwhile, we note that
$$\varphi(s) := s^2 e^s \psi'(s) =  s^2 +s+1 -e^s. $$
It is easy to verify that $\varphi(s)$ has a unique root, denoted by $s_*$, in $(0,\infty)$, so does $\psi'(s)$. Therefore, $\kappa_1 = f(s_*)$.
Using the Newton's algorithm, we find $s_*\approx 1.793$ and hence $\kappa_1\approx 0.2984$.
\end{proof}

Now we state our main theorem which verifies the desired result \eqref{eqn:conv-factor} with $\gamma\approx 0.3$.

\begin{theorem}\label{thm:conv-1}
Let conditions (P1)-(P3) hold valid. Then there exists a threshold $J_*>0$ such that
for all $J\ge J_*$
 \begin{equation*}
\sup_{s\in(0,\infty)}  \left|\frac{(1+s) r(-s/J)^J-1}{s}\right| \le 0.3,
\end{equation*}
        \end{theorem}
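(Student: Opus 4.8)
The plan is to show that the supremum in question converges, as $J\to\infty$, to the constant $\kappa_1$ from \eqref{eqn:kappa-alpha}, which by Lemma~\ref{lem:exp-1} is approximately $0.2984<0.3$; the strict gap $0.3-\kappa_1>0$ then leaves room to absorb an error that vanishes as $J$ grows. Concretely, I would split, for each fixed $s>0$,
\begin{equation*}
\frac{(1+s)r(-s/J)^J-1}{s}=\frac{(1+s)e^{-s}-1}{s}+\frac{1+s}{s}\bigl(r(-s/J)^J-e^{-s}\bigr),
\end{equation*}
so that the first term is bounded in absolute value by $\kappa_1$ uniformly in $s$ — this is precisely the definition \eqref{eqn:kappa-alpha} with $\alpha=1$. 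It then suffices to prove that
\begin{equation*}
C_J:=\sup_{s>0}\frac{1+s}{s}\,\bigl|r(-s/J)^J-e^{-s}\bigr|\longrightarrow0\qquad(J\to\infty),
\end{equation*}
since once $C_J\le 0.3-\kappa_1$ the triangle inequality gives the claim, with $J_*$ taken to be the first such index.

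To control $C_J$ I would split $(0,\infty)$ at a threshold $M\ge1$. On the bounded part $s\in(0,M]$ the argument $\lambda=s/J$ is small once $J$ is large, so consistency (P2) yields $|r(-\lambda)-e^{-\lambda}|\le c\,\lambda^{q+1}$; combining this with the telescoping identity $a^J-b^J=(a-b)\sum_{i=0}^{J-1}a^{i}b^{J-1-i}$ for $a=r(-\lambda)$, $b=e^{-\lambda}$, together with the crude bounds $|r(-\lambda)|\le1$ and $e^{-\lambda}\le1$ from (P1), gives
\begin{equation*}
\bigl|r(-s/J)^J-e^{-s}\bigr|\le J\,c\,(s/J)^{q+1}=c\,s^{q+1}/J^{q}.
\end{equation*}
Hence the contribution to $C_J$ from $(0,M]$ is at most $c\,(1+M)M^{q}/J^{q}\to0$.

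The delicate part is the tail $s\ge M$, where $s/J$ need not be small and comparison with $e^{-s}$ is inaccurate; here I would bound $|r(-s/J)^J|$ directly. The key device is the \emph{global} estimate $|r(-\lambda)|\le\max\bigl(e^{-\lambda/2},\rho_1\bigr)$ for all $\lambda>0$: near $\lambda=0$, consistency forces $-\log|r(-\lambda)|/\lambda\to1>\tfrac12$, so $|r(-\lambda)|\le e^{-\lambda/2}$ on some interval $(0,\lambda_1]$, while for $\lambda\ge\lambda_1$ the constant $\rho_1:=\sup_{\lambda\ge\lambda_1}|r(-\lambda)|$ is strictly below $1$ — the continuous function $|r(-\lambda)|$ attains its maximum over the compactified ray $[\lambda_1,\infty]$, and that maximum is $<1$ because $|r(-\lambda)|<1$ for $\lambda>0$ by (P1) and $|r(-\infty)|<1$ by (P3). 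Raising to the $J$th power gives $|r(-s/J)^J|\le\max(e^{-s/2},\rho_1^{J})\le e^{-s/2}+\rho_1^{J}$, so for $s\ge M\ge1$,
\begin{equation*}
\frac{1+s}{s}\bigl|r(-s/J)^J-e^{-s}\bigr|\le 2\bigl(e^{-M/2}+\rho_1^{J}+e^{-M}\bigr).
\end{equation*}
Choosing $M$ large enough that $2(e^{-M/2}+e^{-M})$ falls below half the gap $0.3-\kappa_1$, and then $J$ large enough that both $2\rho_1^{J}$ and the $(0,M]$ bound fall below that same half-gap, yields $C_J\le0.3-\kappa_1$ and finishes the argument.

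The main obstacle is exactly the uniformity in $s$: for large $J$ the error $r(-s/J)^J-e^{-s}$ is small only in an $s$-dependent sense, so a naive pointwise limit does not deliver a uniform bound. Reconciling the two regimes — the Taylor/consistency comparison, valid when $s/J$ is small, and the strong-stability decay, valid once $\lambda$ is bounded away from $0$ — is the crux. It is here that (P3) is indispensable: without $|r(-\infty)|<1$ one cannot force $\rho_1<1$, and the tail contribution need not be controllable at all.
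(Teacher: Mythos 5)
Your proposal is correct, and its overall architecture coincides with the paper's: both reduce the statement to showing that $\sup_{s>0}\frac{1+s}{s}\bigl|r(-s/J)^J-e^{-s}\bigr|\to 0$ as $J\to\infty$, so that the supremum in the theorem tends to $\kappa_1\approx 0.2984<0.3$ by Lemma \ref{lem:exp-1}, and both handle the region where $s/J$ is small by the consistency estimate $|r(-\lambda)-e^{-\lambda}|\le c\lambda^{q+1}$ combined with the telescoping identity $a^J-b^J=(a-b)\sum_{i}a^ib^{J-1-i}$ and the bounds $|r|,e^{-\lambda}\le 1$. Where you genuinely diverge is the tail. The paper disposes of $s>\delta$ in one line by citing the nonsmooth-data error estimate of \cite[Theorem 7.2]{Thomee:2006}, which gives $|r(-s/J)^J-e^{-s}|\le cJ^{-q}$ uniformly in $s>0$; this is exactly the point where (P3) is consumed, but the mechanism is hidden inside the citation, and it yields the full rate $O(J^{-q})$ for the convergence of the supremum to $\kappa_1$. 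You instead prove the tail bound from scratch via the global estimate $|r(-\lambda)|\le\max(e^{-\lambda/2},\rho_1)$ — consistency forcing $|r(-\lambda)|\le e^{-\lambda/2}$ near $0$, and $\rho_1=\sup_{\lambda\ge\lambda_1}|r(-\lambda)|<1$ by continuity, (P1) and (P3) on the compactified ray — so that $|r(-s/J)|^J\le e^{-s/2}+\rho_1^J$. This is self-contained and makes the role of strong stability completely explicit, at the price of a two-parameter limit (first $M$, then $J$) that establishes convergence of $C_J$ to zero without a rate; one could recover a rate by letting $M$ grow logarithmically in $J$, but that is not needed for the theorem. Both arguments are sound; yours is essentially an elementary re-derivation of the scalar case of the lemma the paper cites.
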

\begin{proof}
First of all, we aim to show that
 \begin{equation}\label{eqn:lim}
  \lim_{J\rightarrow\infty}\sup_{s\geq0}\frac{(1+s)}{s} \left|r(-s/J)^J-e^{- s}\right|=0.
  \end{equation}
For a given $\delta>0$, for any $s>\delta$,  it is obvious that $(1+s)/s$
is bounded by a constant $C(\delta)$. Meanwhile, note that
conditions (P1)-(P3) are fullfilled.
Then by means of the nonsmooth data error estimate \cite[Theorem 7.2]{Thomee:2006}, there holds
\begin{equation*}
\left|r(-s/J)^J-e^{-  s}\right| \le c J^{-q},
\end{equation*}
where $c$ is independent of $s$.
Then we derive
\begin{equation*}
\lim_{J\rightarrow\infty}\sup_{s> \delta}\frac{(1+s)}{s} \left|r(-s/J)^J-e^{-  s}\right|=0.
\end{equation*}
For $0 < s \leq\delta$, conditions (P1) and (P2) imply
\begin{equation*}
\begin{aligned}
   &\quad \frac{(1+s)}{s}   \left|r(-s/J)^J-e^{-  s}\right| \\
    &= \dfrac{(1+s)}{s}|r(-s/J)^{-1}-e^{- s/J}|
    \left|\sum_{i=0}^{J-1} r(-s/J)^{-i} e^{ -(J-1-i)  s/J}\right| \\
    &\leq \dfrac{(1+s)}{s} \cdot C(\alpha, \delta)\Big(\frac{s}{J}\Big)^{q+1} \cdot J \\
    &\leq C(\alpha, \delta) J^{-q}.
\end{aligned}
\end{equation*}
Therefore we arrive at
$$\lim_{J\to\infty}\sup_{0<s\leq\delta}\frac{(1+s)}{s} \left|r(-s/J)^{-J} -e^{- s}\right|=0,$$
which completes the proof of the \eqref{eqn:lim}.
This together with Lemma \ref{lem:exp-1} implies that
    $$\lim_{J\to\infty}\sup_{s\in(0,\infty)}
    \left| \frac{(1+s)r(-s/J)^J-1}{s}  \right| = \kappa_1 < 0.3,$$
which completes the proof of the lemma.
\end{proof}

Next, using Theorem \ref{thm:conv-1}, we are able to show the linear convergence of the parareal iteration \ref{alg:para}.

\begin{theorem}\label{thm:conv-2}
Let conditions (P1)-(P3) be fullfilled and the data regularity in Lemma \ref{lem:conv-00} hold valid. Let $u^n$ be the solution to the time stepping scheme \eqref{eqn:semi},
and $U_k^n$ be the solution obtained from the parareal algorithm \ref{alg:para}.  Then there exists a threshold $J_*>0$ such that
for all $J\ge J_*$,
 we have
$$ \max_{1 \le n\le N_c} \| U_k^n - u^{nJ} \|  \le c \gamma^k\quad \text{with} \quad \gamma=0.3.$$
\end{theorem}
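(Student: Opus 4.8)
The plan is to combine the per-mode contraction already recorded in \eqref{eqn:factor-0} with the uniform bound on the leading factor supplied by Theorem \ref{thm:conv-1}, and then reassemble the modal estimates into an $H$-norm bound via Parseval's identity. Since the error recursion decouples completely across the eigenmodes $\{\phi_j\}$, the whole argument is essentially scalar once the contraction factor is controlled. Concretely, I would first fix $J\ge J_*$ so that, by Theorem \ref{thm:conv-1}, $\sup_{s\in(0,\infty)}\big|((1+s)r(-s/J)^J-1)/s\big|\le \gamma$ with $\gamma=0.3$. Because this supremum runs over all $s>0$ and is therefore independent of the index $j$, inequality \eqref{eqn:factor-0} yields, for every mode $j$ and every $n$, the bound $|e_{k+1,j}^{n+1}|\le \gamma\,\max_{1\le m\le N_c}|e_{k,j}^{m}|$.

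Next I would run the induction on $k$ at the modal level. Writing $M_{k,j}:=\max_{1\le n\le N_c}|e_{k,j}^{n}|$ and using that $e_{k,j}^{0}=(U_k^0-u^0,\phi_j)=0$ (so that the boundary term in the telescoping of \eqref{eqn:factor-0} vanishes and in particular $e_{k+1,j}^{1}=0$), the displayed bound gives $M_{k+1,j}\le \gamma\,M_{k,j}$, and hence $M_{k,j}\le \gamma^{k} M_{0,j}$ for all $k$.

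Finally I would reassemble via Parseval. Orthonormality of $\{\phi_j\}$ gives $\|E_k^n\|^2=\sum_j |e_{k,j}^n|^2\le \sum_j M_{k,j}^2\le \gamma^{2k}\sum_j M_{0,j}^2$, and since the right-hand side is independent of $n$, taking the maximum over $n$ and a square root yields $\max_{1\le n\le N_c}\|E_k^n\|\le \gamma^{k}\big(\sum_j M_{0,j}^2\big)^{1/2}$. It then remains to verify that $c:=\big(\sum_j M_{0,j}^2\big)^{1/2}$ is finite and independent of $k$; I would bound it crudely through $\max_n|e_{0,j}^n|^2\le \sum_n |e_{0,j}^n|^2$, so that $\sum_j M_{0,j}^2\le \sum_{n=1}^{N_c}\|E_0^n\|^2$, and then control each initial error $\|E_0^n\|=\|U_0^n-u^{nJ}\|$ via the stability and discretization estimates of Lemma \ref{lem:conv-00}, comparing both the coarse backward-Euler iterate $U_0^n$ and the fine solution $u^{nJ}$ against the continuous solution $u(T_n)$.

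The genuinely hard analytic content lies entirely in Theorem \ref{thm:conv-1} (and the sharp value $\kappa_1\approx 0.2984$ behind it), which is already available; given that, Theorem \ref{thm:conv-2} is essentially bookkeeping and the geometric decay $\gamma^k$ is immediate from the per-mode contraction. The only real subtlety I anticipate is the passage from the modal bounds to the $H$-norm bound: the operations $\max_n$ and $\sum_j$ do not commute, and the crude interchange above forces the constant $c$ to absorb the full collection of initial errors $\{E_0^n\}_{n}$ rather than merely $\max_n\|E_0^n\|$. I therefore expect the minor obstacle to be the argument that $c$ is finite and $k$-independent, while the contraction rate itself requires no further work.
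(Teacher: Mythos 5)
Your proposal is correct and follows essentially the same route as the paper: the per-mode contraction from \eqref{eqn:factor-0} combined with Theorem \ref{thm:conv-1}, reassembly by Parseval with the crude interchange $\max_n|e_{0,j}^n|^2\le\sum_n|e_{0,j}^n|^2$, and control of the initial error by comparing $U_0^n$ and $u^{nJ}$ to $u(T_n)$ via Lemma \ref{lem:conv-00}, which yields $\|E_0^n\|\le c\,n^{-1}$ and hence a constant $\sum_n\|E_0^n\|^2\le c\sum_n n^{-2}$ that is finite uniformly in $N_c$ — exactly the point you flagged as the only remaining subtlety.
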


\begin{proof}
In Algorithm \ref{alg:para}, the initial guess $U_0^n$ is obtained by the coarse propagator, i.e., the backward Euler scheme.
Then Lemma \ref{lem:conv-00} (with $q=1$) implies the estimate
\begin{equation}\label{eqn:conv-3}
\begin{aligned}
 \| U_0^n - u^{nJ} \| &\le \| U_0^n - u(T_n)\|  + \|u(T_n)- u^{nJ} \| \\
 &\le c \big((\Delta T) T_n^{-1} + (\Delta t) t_{nJ}^{-1} \big)\le c n^{-1}.
\end{aligned}
\end{equation}
Let $E_k^n =U_k^n - u^{nJ}$ and $e_{k,j}^n = (E_k^n, \phi_j)$. The the relation \eqref{eqn:factor-0} and
Theorem \ref{thm:conv-1} imply
\begin{equation*}
\begin{aligned}
\sup_{1\le n\le N_c} \|  E_k^n \|^2
& \le  \sum_{j=1}^\infty \sup_{1\le n\le N_c} | e_{k,j}^n |^2 \le \gamma^2  \sum_{j=1}^\infty \sup_{1\le n\le N_c} | e_{k-1,j}^n |^2\\
& \le \cdots     \le \gamma^{2k}  \sum_{j=1}^\infty \sup_{1\le n\le N_c} | e_{0,j}^n |^2
    \end{aligned}
\end{equation*}
with $\gamma = 0.3$. This together with the estimate
$
  \sup_{1\le n\le N_c} | e_{0,j}^n |^2 
   \le  \sum_{n=1}^{N_c} | e_{0,j}^n |^2
$
leads to
\begin{equation*}
\begin{aligned}
\sup_{1\le n\le N_c} \|  E_k^n \|^2  &\le c \gamma^{2k}  \sum_{j=1}^\infty  \sum_{n=1}^{N_c} | e_{0,j}^n |^2
 \le  c \gamma^{2k}   \sum_{n=1}^{N_c} \| E_{0}^n \|^2
  \le  c \gamma^{2k}   \sum_{n=1}^{N_c} n^{-2}
 \le c \gamma^{2k} ,
    \end{aligned}
\end{equation*}
where in the second last inequality we apply the estimate \eqref{eqn:conv-3}.
 This completes the proof of the theorem.
\end{proof}

\begin{remark}\label{rem:conv-1}
Theorem \ref{thm:conv-2} provides an useful upper bound of the convergence factor for all single step integrators
(satisfying (P1)-(P3)),
which might not be sharp for specific one. For example, in \cite[Lemma 4.3]{MathewSarkisSchaerer:2010},
Mathew, Sarkis and Schaerer considered the backward Euler method and proved
that the convergence factor of the Parareal algorithm is around $0.298$ (with $J_*=2$).
In \cite{Wu:2015}, Wu showed that convergence factors are $0.316$ (with $J_*=2$) and $0.333$ (with $J_*=2$)
for the second-order diagonal implicit Runge--Kutta method
and a single step TR/BDF2 method (i.e., the ode23tb solver for ODEs in MATLAB), respectively.
These error bounds might be slightly improved by increasing $J_*$.
See also \cite{WuZhou:2015} for the analysis for a third-order diagonal implicit Runge--Kutta method with a convergence factor $0.333$ and $J_*=4$.
\end{remark}

\begin{remark}\label{rem:conv-2}
Theorem \ref{thm:conv-2} only provides the existence of the threshold $J_*$ without any upper bound estimate.
It is obvious that a huge $J_*$ may destroy the parallelism of the algorithm.
Then a question arise naturally: is it possible to find $J_*$ for a given scheme satisfying conditions (P1)-(P3)? This is the focus of Section \ref{sec:RK}.
\end{remark}

\section{Case studies for several high-order single step integrators}\label{sec:RK}

In this section, we shall study some popular single step methods. As we mentioned in Remark \ref{rem:conv-2},
Theorem \ref{thm:conv-2} did not provide an sharp estimate for the threshold $J_*$.
In fact, there is no universal estimate for all single step methods. Fortunately,
for any given single step integrator satisfying conditions (P1)-(P3) and fixed convergence rate
$\gamma>0.2984$,
we have a regular routine to find a sharper estimate for $J_*$.

We consider three time-stepping methods,
namely the the two-, three-, four-stage Lobatto IIIC methods, which are respectively second-, fourth- and sixth-order accurate, to the
initial and boundary value problem \eqref{eqn:pde}.
For the reader's convenience, we present the Butcher tableaus of the
two-, three-, four-stage Lobatto IIIC methods, respectively,
%
\begin{equation}
\label{2-stage}
\begin{tabular}{cc|c}
 $\frac12$ &  $-\frac12$  & $0$\\[2pt]
 $\frac12$  & $\frac12 $& $1$ \\[2pt]
\hline
 $\frac12 $  & $\frac12$  & $\; $ \\
\end{tabular}
\ =:
\begin{tabular}{c|c}
 $\AA$ & $c$ \\[1pt]
\hline
 $b^\top$  & $\vphantom{\sum^{\sum^\sum}} $ \\
\end{tabular}
\end{equation},

\begin{equation}
\label{3-stage}
\begin{tabular}{ccc|c}
 $\frac16$ &  $-\frac31$
 & $\frac16$ & $0$ \\[2pt]
 $\frac16$  & $\frac5{12}$
 & $-\frac1{12}$ & $\frac12$ \\[2pt]
 $\frac16$  & $\frac23$
 & $\frac16$ & $1$ \\*[3pt]
\hline
 $\frac16 $  & $\frac23$ & $ \frac16$   & $\; $ \\
\end{tabular}
\ =:
\begin{tabular}{c|c}
 $\AA$ & $c$ \\[1pt]
\hline
 $b^\top$  & $\vphantom{\sum^{\sum^\sum}} $ \\
\end{tabular}
\end{equation}
and
\begin{equation}
\label{4-stage}
\begin{tabular}{cccc|c}
 $\frac1{12}$ &  $-\frac {\sqrt{5}}{12}$
 & $\frac {\sqrt{5}}{12}$ & $-\frac1{12}$ & $0$ \\*[2pt]
 $\frac1{12}$  & $\frac14 $ & $\frac{10-7\sqrt{5}}{60}$
 & $\frac{\sqrt{5}}{60}$
 & $\frac12-\frac{\sqrt{5}}{10}$\\*[2pt]
 $\frac1{12}$  & $\frac{10+7\sqrt{5}}{60}$
 &$\frac14$& $-\frac{\sqrt{5}}{60}$
 &  $\frac12+\frac{\sqrt{5}}{10}$ \\*[2pt]
  $\frac1{12}$ & $\frac5{12}$ &  $\frac5{12}$
 & $\frac1{12}$ & $1$ \\[3pt]
\hline
   $\frac1{12}$ & $\frac5{12}$ &  $\frac5{12}$
 & $\frac1{12}$ &  \\
\end{tabular}
\ =:
\begin{tabular}{c|c}
 $\AA$ & $c$ \\[1pt]
\hline
 $b^\top$  & $\vphantom{\sum^{\sum^\sum}} $ \\
\end{tabular}
\,.
\end{equation}

Let us also briefly recall some  well-known facts about Lobatto IIIC; for details we refer to \cite{HW}.
These methods can be viewed as discontinuous collocation methods.
The \textsl{order} of the $m$-stage Lobatto IIIC methods is
$q=2m-2$.
In particular, the methods are \textsl{algebraically stable} and
{\textsl{L-stable}}, that makes them suitable for stiff problems.
The stability functions $r$,
\[r(z):=1+z b^\top (I-z\AA)^{-1} \mathbbm{1}\quad\text{with}\quad  \mathbbm{1}:=(1,\dotsc,1)^\top\in \R^q,\]
is given by the $(m-2,m)$-Pad\'e
approximation to $e^z$ and
vanishes at infinity, i.e., $r(\infty)=1-b^T \AA^{-1} \mathbbm{1}=0.$
Note that the computational cost of implicit Runge-Kutta methods increases fast with the stage number, and we refer
to \cite{Butcher:1976, RK:1988, JN:1995} and the reference therein for some efficient implementations.

The following argument highly depends on the upper bound for the constant $\kappa_\alpha$
defined in \eqref{eqn:kappa-alpha}. From Figure \ref{fig:ka-alpha},
we observe that Lemma \ref{lem:exp} gives an sharp estimate
for $\kappa_\alpha$ for $\alpha<0.7$, while the estimate for $\alpha>1$ could be further improved. The next lemma provides an estimate for $\alpha=1.02$, which is useful in the analysis of convergence rate.

\begin{lemma}\label{lem:exp-102}
Let $\kappa_\alpha$ be the constant defined in \eqref{eqn:kappa-alpha}. Then $\kappa_{1.02} \approx 0.3078<0.31$.
\end{lemma}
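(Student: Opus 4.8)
The plan is to mimic the proof of Lemma \ref{lem:exp-1}, now with the exponent $\alpha = 1.02$ in place of $1$. First I would record that, since $\alpha > 1$, the chain of inequalities $1 + s \le e^s \le e^{\alpha s}$ already used in Lemma \ref{lem:exp} gives $(1+s)e^{-\alpha s} - 1 \le 0$ for all $s \ge 0$. Consequently the absolute value in \eqref{eqn:kappa-alpha} can be dropped in favour of the nonnegative function
$$\psi(s) := \frac{1 - (1+s)e^{-\alpha s}}{s}, \qquad \alpha = 1.02,$$
so that $\kappa_{1.02} = \sup_{s>0}\psi(s)$. A boundary check via Taylor expansion at the origin and the limit at infinity shows $\psi(0^+) = \alpha - 1 = 0.02$ and $\psi(\infty) = 0$, so the supremum is attained at an interior critical point.

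Next I would locate that critical point. Differentiating and clearing the positive factor $s^2 e^{\alpha s}$ reduces the equation $\psi'(s) = 0$ to
$$\varphi(s) := \alpha s^2 + \alpha s + 1 - e^{\alpha s} = 0,$$
and in fact $\psi'(s)$ carries the same sign as $\varphi(s)$. The decisive step is to show that $\varphi$ has a unique root $s_*$ in $(0,\infty)$. This follows from a second-derivative analysis: $\varphi(0) = \varphi'(0) = 0$, while $\varphi''(s) = \alpha(2 - \alpha e^{\alpha s})$ changes sign exactly once, at $s = \alpha^{-1}\ln(2/\alpha)$. Hence $\varphi'$ rises from $0$ to a single positive maximum and then decreases to $-\infty$, so $\varphi'$ vanishes at a unique $s_0 > 0$; this forces $\varphi$ to increase on $(0, s_0)$ and decrease on $(s_0,\infty)$ from a strictly positive maximum down to $-\infty$, yielding a single positive zero $s_*$. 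The sign pattern of $\varphi$ then makes $\psi$ increase on $(0, s_*)$ and decrease thereafter, so $\kappa_{1.02} = \psi(s_*)$.

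Finally I would compute $s_*$ numerically by Newton's method applied to $\varphi$, seeded near the value $1.793$ obtained in the case $\alpha = 1$, and evaluate $\psi(s_*)$, which gives $\kappa_{1.02} \approx 0.3078 < 0.31$.

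The main obstacle is the uniqueness of the positive root of $\varphi$. Unlike the case $\alpha = 1$ of Lemma \ref{lem:exp-1}, where $\psi(0^+)$ was exactly $0$, here $\psi(0^+) = 0.02 \neq 0$, so one must be a little more careful to rule out spurious critical points near the origin; the convexity/concavity argument for $\varphi'$ above settles this, and it is the one step that requires genuine verification rather than a routine calculation.
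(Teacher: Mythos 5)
Your proposal is correct and follows essentially the same route as the paper: drop the absolute value using $(1+s)e^{-\alpha s}\le 1$ for $\alpha\ge 1$, reduce $\psi'(s)=0$ to the vanishing of $\varphi(s)=\alpha s^2+\alpha s+1-e^{\alpha s}$ (the paper's $g(s)=1-(1+\beta s+\beta s^2)e^{-\beta s}$ is just $-e^{-\beta s}\varphi(s)$), establish uniqueness of the positive root, and finish with Newton's method. The only cosmetic difference is that you get uniqueness from $\varphi(0)=\varphi'(0)=0$ together with the single sign change of $\varphi''$, whereas the paper uses $g(0)=0$, the single root of $g'$, Rolle's theorem, and a sign check of $\psi'$ at $s=1$ and $s=2$; both arguments are sound.
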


\begin{proof}
With $\beta=1.02 \geq 1$ and $\psi(s) = \frac{(1+s)e^{-\beta s} - 1}{s}$,
we observe that
$\psi(0+) = 1-\beta$ and $\psi(\infty)= 0$.
Meanwhile, since $e^{-\beta s} \leq e^{-s} \leq (1+s)^{-1}$ for $s\ge0$,
we derive that $\psi(s) \leq 0$.
Now we intend to show that $\psi'(s)$ admits a unique root in $(0,\infty)$, denoted as $x_*$. Then $\kappa_{\beta} = \psi(x_*)$.
Noting that
$$\psi'(s) =
\frac{1-(1 + \beta s + \beta s^2)e^{-\beta s}}{s^2},$$
it suffices to show that
$g(s) = 1-(1 + \beta s + \beta s^2)e^{-\beta s}$ has a unique root in $(0,\infty)$.
It is straightforward to see that the function
$$g'(s) = (-2+\beta + \beta s)\beta s e^{-\beta s}$$
admits a unique root in $(0,\infty)$. Then by the fact that $g(0)=0$ and Rolle's theorem, we conclude that $g$ has at most one root in $(0,\infty)$.
Meanwhile, we observe $\psi'(1)=1-(1+2\beta)e^{-\beta} \approx -0.0962 <0$ and
$\psi'(2) = \frac{1-(1+6\beta)e^{-2\beta}}{4} \approx 0.01855 >0$.
Therefore, there exists a unique root of $\psi'$ in $(0, \infty)$, named as $x_*$,
which lies in $(1,2)$. Using the Newton’s algorithm,  we find $x_*\approx 1.715$ and hence $\kappa_\beta = f(x_*) \approx 0.3078 \le 0.31$.
\end{proof}

\begin{proposition}\label{prop:lobatto-2}
Let $u^n$ be the solution to the time stepping scheme \eqref{eqn:semi} using
the two-stage Lobatto IIIC method \eqref{2-stage},
and $U_k^n$ be the solution obtained from the parareal algorithm \ref{alg:para}. Then for all $J\ge 2$, there holds
$$ \max_{1 \le n\le N_c} \| U_k^n - u^n \|  \le c \gamma^k\quad \text{with} \quad \gamma=0.31.$$
\end{proposition}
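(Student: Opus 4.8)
The plan is to verify the scalar convergence-factor criterion \eqref{eqn:conv-factor} with $\gamma = 0.31$ directly, using the explicit stability function of the two-stage Lobatto IIIC scheme, and then to invoke Theorem \ref{thm:conv-2}. From the tableau \eqref{2-stage} (equivalently, the $(0,2)$-Pad\'e approximant to $e^z$), the stability function is $r(z) = (1 - z + z^2/2)^{-1}$, so that
$$ r(-s/J) = \Big(1 + \tfrac{s}{J} + \tfrac{s^2}{2J^2}\Big)^{-1}, \qquad s>0. $$
Setting $\Phi_J(s) := \big((1+s)\,r(-s/J)^J - 1\big)/s$, it suffices to prove $|\Phi_J(s)| \le 0.31$ for all $s>0$ and all integers $J\ge 2$; the threshold is then exactly $J_*=2$, the smallest admissible value.

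The first step is to fix the sign of $\Phi_J$. Since $1 + s/J + s^2/(2J^2) > 1 + s/J$, Bernoulli's inequality $(1+s/J)^J \ge 1+s$ gives
$$ r(-s/J)^J < \Big(1+\tfrac{s}{J}\Big)^{-J} \le (1+s)^{-1}, $$
whence $(1+s)\,r(-s/J)^J < 1$ and therefore $\Phi_J(s) < 0$ for every $s>0$ and every $J\ge 1$. Consequently $|\Phi_J(s)| = \big(1 - (1+s)\,r(-s/J)^J\big)/s$, and it remains only to bound $r(-s/J)^J$ from below.

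For the lower bound I would compare with the exponential: from $e^x > 1 + x + x^2/2$ for $x>0$, taking $x = s/J$ yields $r(-s/J) > e^{-s/J}$ and hence $r(-s/J)^J > e^{-s}$, uniformly in $J$. Combining the two estimates,
$$ |\Phi_J(s)| = \frac{1 - (1+s)\,r(-s/J)^J}{s} < \frac{1-(1+s)e^{-s}}{s} \le \kappa_1 \approx 0.2984, $$
by Lemma \ref{lem:exp-1}; this already gives $|\Phi_J(s)| < \kappa_1 < 0.31$ for all $s>0$ and $J\ge 2$. If one prefers to rely on Lemma \ref{lem:exp-102}, the same chain with the coarser bound $r(-s/J)^J > e^{-s} \ge e^{-1.02\,s}$ yields $|\Phi_J(s)| < \kappa_{1.02} \approx 0.3078 < 0.31$, which matches the uniform treatment intended for the three- and four-stage cases. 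In either case \eqref{eqn:conv-factor} holds with $\gamma = 0.31$, and since the two-stage Lobatto IIIC method is L-stable and satisfies (P1)--(P3), Theorem \ref{thm:conv-2} delivers the asserted linear convergence.

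I do not anticipate a genuine obstacle: the proof reduces to two elementary one-variable inequalities. The only point deserving care is that both comparisons---the Bernoulli bound $(1+s)r(-s/J)^J<1$ that rules out positive values of $\Phi_J$, and the series bound $r(-s/J)^J > e^{-s}$ that controls its most negative value---hold for every $J\ge 1$ with no dependence on the spectrum of $A$ or on the step sizes. This uniformity is precisely what forces the threshold down to the minimal value $J_*=2$, in contrast to the higher-order schemes where the analogous comparison with $e^{-s}$ is more delicate and genuinely motivates the sharper constant $\kappa_{1.02}$ of Lemma \ref{lem:exp-102}.
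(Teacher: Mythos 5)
Your proof is correct, and it takes a genuinely different (and in fact sharper) route than the paper's. Both arguments share the lower bound $r(-s/J)^J>e^{-s}$, which follows from $e^x>1+x+x^2/2$ and holds globally in $s$. Where you diverge is in handling the other side: the paper sandwiches $r(-s)$ between $e^{-\beta s}$ and $e^{-\alpha s}$ (with $\alpha=0.69$, $\beta=1.02$) on the finite interval $(0,s_*)$, $s_*=3.2$ --- the upper comparison $r(-s)\le e^{-\alpha s}$ necessarily fails for large $s$ since $r(-s)$ decays only algebraically --- and then patches the tail $s>Js_*$ with the crude bound $|r(-s)|\le 0.11$ there. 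Your observation that $r(-s/J)<(1+s/J)^{-1}$ together with Bernoulli forces $(1+s)r(-s/J)^J<1$, i.e.\ the quantity in \eqref{eqn:conv-factor} is negative for every $s>0$, makes the entire upper-bound/tail machinery unnecessary: only the global lower bound matters, and you land directly on $\kappa_1\approx 0.2984$ via Lemma \ref{lem:exp-1}, which is strictly better than the paper's $0.31$ and valid for every $J\ge 1$. What the paper's heavier template buys is reusability: for the three- and four-stage Lobatto IIIC methods the stability function changes sign (e.g.\ $r(-s)=(24-6s)/(s^3+6s^2+18s+24)$ is negative for $s>4$), so the sign argument and the clean one-sided comparison both break down, and the sandwich-plus-tail structure is what carries over. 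Your closing remark correctly anticipates this limitation; for the two-stage case itself, your argument is complete and preferable.
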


\begin{proof}
It suffices to show that for any $J\ge 2$, there holds
\begin{equation}\label{eqn:lobatto-2}
    \sup_{s\in(0,\infty)}  \left|\frac{(1+s) r(-s/J)^J-1}{s}\right| \le 0.31,
    \quad
    \text{where}~~ r(-s) = \frac{2}{s^2+2s+2}.
\end{equation}
To this end, we define $\alpha = 0.69$ and $\beta=1.02$.
Then Lemma \ref{lem:exp} implies that
$\kappa_\alpha \le 1-0.69 = 0.31$,
and meanwhile Lemma \ref{lem:exp-102} indicates $\kappa_\beta \le 0.31$.

Next, we aim to show that $e^{-\beta s} \le r(-s) \le e^{-\alpha s}$ for all $s\in(0,s_*)$ with $s_*=3.2$. First of all, using the fact that
$\frac{2}{s^2+2s+2} \ge {e^{-s}} \ge e^{-\beta s}$ for all $s > 0$, we derive
the first inequality $e^{-\beta s} \le r(-s)$.
Then we turn to the second inequality
$r(-s) \le e^{-\alpha s}$, equivalent to
$g(s) := 2e^{\alpha s} -(s^2+2s+2) \le 0$ in  $(0, s_*)$.
Noting that $g''(s) = 2\alpha^2 e^{\alpha s} - 2$, which admits a
unique root at $-\frac{2\ln \alpha}{\alpha}$.
Meanwhile, we observe that
$g''(s) <0$ in $(0, -\frac{2\ln \alpha}{\alpha})$,
and $g''(s) >0$ in $( -\frac{2\ln \alpha}{\alpha}, \infty)$.
Besides, since $g'\left( -\frac{2\ln \alpha}{\alpha}\right)  =
\frac{4\ln \alpha +2-2\alpha}{\alpha}<0$
and $g'(0)=2\alpha-2 <0$, we conclude that
$g'(s) < 0$ in $(0, -\frac{2\ln \alpha}{\alpha})$,
and $g'(s)$ has a unique root in $(-\frac{2\ln \alpha}{\alpha}, \infty)$. Moreover, the facts $g'(2) \approx -0.5 <0$ and $g'(3) \approx 2.9 >0$ implies that
there exists a constant $s_1 \in (2,3)$, s.t. $g'(s_1)= 0$,
and $g'(s) <0$ in $(0, s_1)$,
$g'(s) >0$ in $(s_1, \infty)$.
Then we note that $g(s_*) = -0.445 <0$ and conclude that
$g(s) := 2e^{\alpha s} -(s^2+2s+2) \le 0$ in  $(0, s_*)$,
which implies $r(s) \le e^{-\alpha s}$ in $(0, s_*)$.
As a result, we arrive at $e^{-\beta s} \le r(-s/J)^J \le e^{-\alpha s}$
for all $s\in(0, J s_*)$ which  implies
\begin{equation*}
    \sup_{s\in(0, J s_*)}
    \left|\frac{(1+s) r(-s/J)^J-1}{s}\right| \le 0.31.
\end{equation*}

Besides, we observe the fact that
$$\sup\limits_{(s_*, \infty)}
\Big|\frac{2}{s^2 + 2s + 2}\Big|
= \Big|\frac{2}{s_*^2 + 2s_* + 2}\Big|
\approx 0.1073 < 0.11.$$
Then we derive for $s\in (Js_*,\infty)$ and $J\ge 2$
\begin{equation*}
\begin{split}
    \left|\frac{(1+s) r(-s/J)^J-1}{s}\right|
    &\le \frac{1+s}{s}|r(-s/J)^J|+s^{-1}
    \le \frac{1+Js_*}{Js_*} (0.11)^J + (Js_*)^{-1}    \\
    &\le  \frac{1+6.4}{6.4} (0.11)^2 + (2\times 3.2)^{-1}
    \approx 0.1702 \le 0.31.
\end{split}
\end{equation*}
This completes the proof of \eqref{eqn:lobatto-2}.
\end{proof}

\begin{proposition}\label{prop:lobatto-3}
Let $u^n$ be the solution to the time stepping scheme
\eqref{eqn:semi} using
the three-stage Lobatto IIIC method \eqref{3-stage},
and $U_k^n$ be the solution obtained from the parareal algorithm \ref{alg:para}. Then for all $J \ge 2$, there holds
$$ \max_{1 \le n\le N_c} \| U_k^n - u^n \|  \le c \gamma^k\quad \text{with} \quad \gamma=0.31.$$
\end{proposition}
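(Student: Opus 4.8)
The plan is to mirror the argument of Proposition \ref{prop:lobatto-2} for the two-stage method, reducing everything to the scalar bound \eqref{eqn:conv-factor}. By \eqref{eqn:factor-0} it suffices to show
\[
\sup_{s\in(0,\infty)} \left|\frac{(1+s)r(-s/J)^J-1}{s}\right| \le 0.31, \qquad J \ge 2,
\]
where now $r$ is the stability function of the three-stage Lobatto IIIC method, i.e.\ the $(1,3)$-Pad\'e approximant of $e^z$, which a direct computation gives as
\[
r(-s) = \frac{24-6s}{s^3+6s^2+18s+24}.
\]
I would keep the same two comparison exponents as before, $\alpha=0.69$ and $\beta=1.02$, so that Lemma \ref{lem:exp} yields $\kappa_\alpha \le 1-0.69 = 0.31$ and Lemma \ref{lem:exp-102} yields $\kappa_\beta \approx 0.3078 < 0.31$.

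The core of the proof is to sandwich $r$ between the two exponentials on a suitable interval $(0,s_*)$, namely $e^{-\beta s} \le r(-s) \le e^{-\alpha s}$. Since the denominator $s^3+6s^2+18s+24$ is positive, each half is equivalent to a sign condition on an auxiliary function: the lower bound reduces to $\ell(s) := (24-6s) - e^{-\beta s}(s^3+6s^2+18s+24) \ge 0$ and the upper bound to $u(s) := e^{-\alpha s}(s^3+6s^2+18s+24) - (24-6s) \ge 0$. I would establish each by the derivative-and-Rolle bookkeeping already used in Lemmas \ref{lem:exp-1} and \ref{lem:exp-102}: differentiate, locate the (few) sign changes of the derivatives explicitly, and propagate positivity from $\ell(0)=u(0)=0$. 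Once the sandwich holds on $(0,s_*)$ with $s_*<4$, the factor $r(-s/J)$ is positive there, so raising to the $J$-th power is monotone and gives $e^{-\beta s} \le r(-s/J)^J \le e^{-\alpha s}$ for $s\in(0,Js_*)$, whence
\[
-\kappa_\beta \le \frac{(1+s)e^{-\beta s}-1}{s} \le \frac{(1+s)r(-s/J)^J-1}{s} \le \frac{(1+s)e^{-\alpha s}-1}{s} \le \kappa_\alpha ,
\]
so the quantity is bounded in absolute value by $\max(\kappa_\alpha,\kappa_\beta)\le 0.31$ on $(0,Js_*)$.

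For the tail $s \ge Js_*$ I would use the crude estimate $|(1+s)r(-s/J)^J-1|/s \le \tfrac{1+s}{s}|r(-s/J)|^J + s^{-1}$ together with $\sup_{t\ge s_*}|r(-t)| \le M$, where $M$ is read from the graph of $r$ (the supremum is attained near $t=s_*$, since $|r|$ decays afterward and its negative lobe beyond $t=4$ is far smaller). For $J\ge 2$ and $s\ge Js_*$ this gives $\tfrac{1+s}{s}M^J + s^{-1} \le (1+(Js_*)^{-1})M^2 + (Js_*)^{-1}$; with the expected values $s_*\approx 2$ and $M\approx 0.13$ the right-hand side stays around $0.27$, hence below $0.31$, and the bound only improves as $J$ grows, so $J=2$ is the worst case and the estimate is uniform in $J\ge 2$.

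The main obstacle is establishing the sandwich, and in particular its lower half. Unlike the two-stage case, the numerator $24-6s$ vanishes at $s=4$, so $r$ is not positive throughout $(0,\infty)$ and the inequality $e^{-\beta s}\le r(-s)$ can survive only on a bounded interval; moreover its margin is thin---it is already nearly tight near $s\approx 2$---which forces a comparatively small $s_*$ (contrast the value $s_*=3.2$ available for the two-stage method) and makes the interplay between the chosen $s_*$ and the tail constant $M$ delicate. Verifying $\ell(s)\ge 0$ \emph{rigorously}, rather than merely checking sample points, is therefore the technical crux, and it is where the explicit derivative sign analysis must be carried out with care so that the sandwich interval and the tail estimate together cover $(0,\infty)$ under the target $0.31$ for every $J\ge 2$.
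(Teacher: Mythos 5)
Your proposal follows essentially the same route as the paper's proof: the same exponents $\alpha=0.69$, $\beta=1.02$ with Lemmas \ref{lem:exp} and \ref{lem:exp-102}, the same sandwich $e^{-\beta s}\le r(-s)\le e^{-\alpha s}$ established by iterated derivative-sign/Rolle bookkeeping, and the same tail splitting; the paper indeed takes $s_*=2$ (forced, as you anticipate, by the tightness of the lower bound near $s\approx 2$ and the sign change of the numerator at $s=4$) and bounds $\sup_{(s_*,\infty)}|r(-s)|=|r(-s_*)|\approx 0.130\le 0.15$, giving a tail estimate of the same form as yours. The only remaining work in your sketch is the explicit derivative analysis you correctly flag as the technical crux, which the paper carries out exactly as you describe.
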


\begin{proof}
Similar to the proof of proposition \ref{prop:lobatto-2}, we aim to show that for any $J\ge 6$
\begin{equation}\label{eqn:lobatto-3}
    \sup_{s\in(0,\infty)}  \left|\frac{(1+s) r(-s/J)^J-1}{s}\right| \le 0.31,
    \quad
    \text{where}~~ r(-s) = \frac{24-6s}{s^3 + 6s^2 + 18s + 24}.
\end{equation}
Letting $\alpha = 0.69$ and $\beta=1.02$,
Lemmas \ref{lem:exp} and \ref{lem:exp-102} implies that
$\kappa_\alpha \le 1-0.69 = 0.31$ and $\kappa_\beta \le 0.31$, respectively.

Next, we show the claim that $e^{-\beta s} \le r(-s) \le e^{-\alpha s}$ for all $s\in(0,s_*)$ with $s_*=2$. To begin with, we shall prove that $r(-s) > e^{-\beta s}$ for all $s\in(0,\infty)$, which is equivalent to
$$\psi(s) = e^{\beta s} (-6s+24) - (s^3+6s^2+18s+24) >0
\quad \forall ~s\in (0, s_*).$$
We note that
$$\psi^{(4)}(s) = 6\beta^3e^{\beta s}(-\beta s + 4\beta - 4).$$
has a unique root in $(0,\infty)$, namely
$s_0= \frac{4\beta - 4}{\beta} \approx0.0784$, and hence
$\psi^{(4)}(s)> 0$ for all $s\in(0,s_0)$.
Besides, we observe that
$\psi^{(3)}(0) = 0.742 >0$, $\psi^{(3)}(s_0) \approx 0.762 >0$.
Therefore $\psi^{(3)}(s)$ has a unique root
in $(s_0, +\infty)$,
denoted as $s_1$. By means of Newton's algorithm, we know that
$s_1\approx 0.4832$.
Similarly, since $\psi''(0) =  0.730>0$ and $\psi''(s_1) =  1.00>0$, we conclude that $\psi''(s)$ is always positive in $[0,s_1]$ and it
has a unique root $s_2\in (s_1,\infty)$, and we find $s_2\approx 0.9980$.
Repeating the argument, we are able to show that
$\psi'(s)$ keeps positive in $[0,s_2]$ and
the unique root in $(s_2,\infty)$ locates at
$s_3 \approx 1.5344$.
Finally, we observe that $\psi(0) = 0$ and $\psi(s_3) \approx 1.401>0$, so $\psi$ is positive in $(0,s_3]$
and it admits a unique root at $s_4\in(s_3,\infty)$.
Noting that $\psi(s_*) \approx 0.2873 > 0$,
we conclude that $r(-s) > e^{-\beta s}$ for all $s\in(0,s_0)$.

Next we will show that $r(-s) < e^{-\alpha s}$ in $(0, s_*)$, which is equivalent to show
\begin{equation*}
    \varphi(s) = e^{\alpha s} (-6s+24) - (s^3+6s^2+18s+24) < 0 \quad \text{for all} ~s\in (0, s_*).
\end{equation*}
We note the fact that
\begin{equation*}
\varphi^{(4)}(s) = 6\alpha^3e^{\alpha s}(-\alpha s + 4\alpha - 4)<0\quad \text{for all} ~s\in (0, \infty).
\end{equation*}
Meanwhile, we have $\varphi^{(4)}(0)<0$, $\varphi^{(3)}(0)<0$, $\varphi''(0)<0$, $\varphi'(0)<0$, and $\varphi(0)<0$.
Those together imply $\varphi(s) <0$ for any
$s\in (0, \infty)$. Therefore $r(-s) < e^{-\alpha s}$.

As a result, for any $J\ge 2$, we arrive at $e^{-\beta s} \le r(-s/J)^J \le e^{-\alpha s}$
for all $s\in(0, J s_*)$, that further implies the estimate
\begin{equation*}
    \sup_{s\in(0, J s_*)}
    \left|\frac{(1+s) r(-s/J)^J-1}{s}\right| \le 0.31.
\end{equation*}

Next, we aim to prove the claim that
\begin{equation}\label{claim:bound-3}
   \sup\limits_{(s_*, \infty)}
   \big|r(-s)\big| = \sup\limits_{(s_*, \infty)}
   \Big|\frac{24-6s}{s^3 + 6s^2 + 18s + 24}\Big|
   \le  0.15.
\end{equation}
To begin with, we show that $\frac{\d}{\d s}r(-s)$ admits a unique root in $(2, \infty)$. 
We note that
$$\frac{\d}{\d s}r(-s) = \frac{12(s^3-3s^2-24s-48)}
{(s^3 + 6s^2 + 18s + 24)^2}.$$
and hence it is sufficient to show that
$\eta(s) = s^3-3s^2-24s-48$
has a unique root in $(2, \infty)$.
Since $\eta'(s)$ has two roots, $-2$ and $4$,
and $\eta(-2)=-20<0$, $\eta(4)=-128<0$,
we conclude that $\eta(s) <0$ for all $s\in[-2,4]$, and
$\eta(s)$ admits a unique root in $(4, \infty)$,
namely $s_5 \approx 7.235$. 
Therefore $r(-s)$ is decreasing in $(s_*,s_5)$
and increasing in $[s_5,\infty)$.
Noting that fact that $r(-s_*) \approx 0.130$,
$r(-s_5)\approx -0.0229$ and $r(-\infty)=0$, we obtian
$
    \sup\limits_{(s_*, \infty)}
    \big|r(-s)\big| = \big|r(-s_*)\big|
    \le  0.15.
$

Therefore, we derive for $s\in (Js_*,\infty)$ and $J\ge 2$
\begin{equation*}
\begin{split}
    \left|\frac{(1+s) r(-s/J)^J-1}{s}\right|
    &\le \frac{1+s}{s}|r(-s/J)^J|+s^{-1}
    \le \frac{1+Js_*}{Js_*} (0.15)^J + (Js_*)^{-1}    \\
    &\le  \frac{5}{4} (0.02)^2 + 4^{-1}
    \approx 0.251 \le 0.31.
\end{split}
\end{equation*}
This completes the proof of \eqref{eqn:lobatto-3} as well as the proposition.
\end{proof}

\begin{proposition}\label{prop:lobatto-4}
Let $u^n$ be the solution to the time stepping scheme
\eqref{eqn:semi} using
the four-stage Lobatto IIIC method \eqref{4-stage},
and $U_k^n$ be the solution obtained from the parareal algorithm \ref{alg:para}. Then for all $J \ge 2$, there holds
$$ \max_{1 \le n\le N_c} \| U_k^n - u^n \|  \le c \gamma^k\quad \text{with} \quad \gamma=0.31.$$
\end{proposition}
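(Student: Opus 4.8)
Following the proofs of Propositions \ref{prop:lobatto-2} and \ref{prop:lobatto-3}, it suffices to establish the kernel bound
\[
\sup_{s\in(0,\infty)}\left|\frac{(1+s)\,r(-s/J)^{J}-1}{s}\right|\le 0.31
\qquad\text{for all }J\ge 2,
\]
where $r$ is now the stability function of the four-stage Lobatto IIIC method, i.e.\ the $(2,4)$-Pad\'e approximant of $e^{z}$. A direct computation gives the explicit rational form
\[
r(-s)=\frac{12s^{2}-120s+360}{s^{4}+12s^{3}+72s^{2}+240s+360}.
\]
As in the previous two cases I would fix $\alpha=0.69$ and $\beta=1.02$, so that Lemma \ref{lem:exp} yields $\kappa_{\alpha}\le 1-0.69=0.31$ and Lemma \ref{lem:exp-102} yields $\kappa_{\beta}\le 0.31$. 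The task then splits into sandwiching $r$ between the two exponentials, $e^{-\beta s}\le r(-s)\le e^{-\alpha s}$, on an interval $(0,s_{*})$, and separately bounding the tail $s>s_{*}$.

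For the sandwich I would study the two auxiliary functions $\psi(s)=e^{\beta s}(12s^{2}-120s+360)-(s^{4}+12s^{3}+72s^{2}+240s+360)$ and $\varphi(s)=e^{\alpha s}(12s^{2}-120s+360)-(s^{4}+12s^{3}+72s^{2}+240s+360)$, the goal being $\psi>0$ and $\varphi<0$ on $(0,s_{*})$ for a suitable $s_{*}$ (around $2$, large enough that the tail estimate below closes at $J=2$). The quadratic numerator $12s^{2}-120s+360$ has negative discriminant, hence is strictly positive, so both functions are genuinely (exponential times a fixed positive quadratic) minus a quartic. Differentiating $e^{\gamma s}(12s^{2}-120s+360)$ preserves a quadratic polynomial factor, while the quartic is annihilated after four derivatives; thus $\psi^{(5)}$ and $\varphi^{(5)}$ are each an exponential times a quadratic and so have at most two sign changes. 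Starting from this top derivative, I would descend through five levels via Rolle's theorem exactly as in Proposition \ref{prop:lobatto-3}, at each level checking the sign at $s=0$ and at the (numerically located) root produced one level higher, so as to confirm that the derivative keeps a constant sign up to that root and has a single additional root beyond it; iterating down to $\psi$ and $\varphi$ then fixes their signs on $(0,s_{*})$.

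Once the sandwich holds on $(0,s_{*})$, taking $J$-th powers gives $e^{-\beta s}\le r(-s/J)^{J}\le e^{-\alpha s}$ on $(0,Js_{*})$, so the kernel is bounded there by $\max(\kappa_{\alpha},\kappa_{\beta})\le 0.31$. For the tail $s>Js_{*}$ I would first prove a uniform smallness bound $\sup_{(s_{*},\infty)}|r(-s)|\le\rho$ with $\rho\le 0.15$: since $r(-s)>0$ on $(0,\infty)$ and $\frac{\d}{\d s}r(-s)$ is a rational function with a fixed polynomial numerator, it has only finitely many critical points, and comparing the resulting local extrema against $r(-\infty)=0$ (noting the decay $r(-s)\sim 12 s^{-2}$) gives the bound. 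The triangle inequality then yields, for $s>Js_{*}$ and $J\ge 2$,
\[
\left|\frac{(1+s)\,r(-s/J)^{J}-1}{s}\right|\le\frac{1+s}{s}\,|r(-s/J)^{J}|+s^{-1}\le\frac{1+Js_{*}}{Js_{*}}\,\rho^{J}+(Js_{*})^{-1},
\]
which, evaluated at $J=2$ with $s_{*}\approx 2$, stays below $0.31$ and completes the kernel bound.

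The main obstacle is the sign-tracking for the sandwich. In the three-stage case the numerator $24-6s$ is linear, which makes $\varphi^{(4)}$ negative for all $s>0$ and lets the upper bound cascade down trivially from the signs at $s=0$. Here the quadratic numerator forces the top derivatives $\psi^{(5)}$ and $\varphi^{(5)}$ to be genuine parabolas that change sign on $(0,\infty)$, so \emph{both} the lower and the upper Pad\'e bounds require the full Rolle's-theorem descent through five derivative levels; the delicate point is to verify the interior sign conditions accurately enough that $s_{*}$ can be taken large enough (at least about $1.7$) for the tail estimate to close at the smallest ratio $J=2$.
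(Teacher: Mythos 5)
Your proposal is correct and follows essentially the same route as the paper's proof: reduce to the kernel bound, sandwich $r(-s)$ between $e^{-\beta s}$ and $e^{-\alpha s}$ (with $\alpha=0.69$, $\beta=1.02$) on $(0,s_*)$ by sign-tracking the derivatives of $\psi$ and $\varphi$, and close the tail via a uniform smallness bound on $|r|$ beyond $s_*$. The only differences are in bookkeeping: the paper takes $s_*=6.8$ with $\sup_{(s_*,\infty)}|r(-s)|\le 0.02$ while your $s_*\approx 2$ with $\rho=0.15$ also closes at $J=2$ (though your parenthetical $s_*\approx 1.7$ would not, since $r(-1.7)\approx 0.18$ pushes the tail estimate above $0.31$), and for $\beta=1.02$ the quadratic factor in $\psi^{(5)}$ has negative discriminant, so the lower bound $r(-s)\ge e^{-\beta s}$ holds on all of $(0,\infty)$ directly from the signs of $\psi^{(k)}(0)$ without the five-level Rolle descent, which is needed only for the upper bound.
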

\begin{proof}
Similar to the proof of Proposition \ref{prop:lobatto-2}, we aim to show that for any $J\ge 2$
\begin{equation}\label{eqn:lobatto-4}
    \sup_{s\in(0,\infty)}  \left|\frac{(1+s) r(-s/J)^J-1}{s}\right| \le 0.31,
    ~~
    \text{where}~~ r(-s) =
    \frac{12s^2-120s+360}{s^4+12s^3+72s^2 + 240s + 360}.
\end{equation}
Letting $\alpha = 0.69$ and $\beta=1.02$,
Lemmas \ref{lem:exp} and \ref{lem:exp-102} implies that
$\kappa_\alpha \le 1-0.69 = 0.31$ and $\kappa_\beta \le 0.31$, respectively. Next, we show the claim that
\begin{equation}\label{claim:sand}
    e^{-\beta s} \le r(-s) \le e^{-\alpha s}~~ \text{for all}~~s\in(0,s_*)~~ \text{with}~s_*=6.8.
\end{equation}

To begin with, we show that, for $s\in(0,\infty)$, $r(-s) \ge e^{-\beta s}$, which is equivalent to
$$\psi(s) = (12s^2-120s+360)e^{\beta s} -(s^4+12s^3+72s^2 + 240s + 360)
 \ge 0.$$
Define $h(s) = \beta^2s^2 + (10\beta-10\beta^2)s+(30\beta^2-50\beta+20)$,
then we have
$$\psi^{(5)}(s) = 12\beta^3e^{\beta s}
\left[\beta^2s^2 + (10\beta-10\beta^2)s+(30\beta^2-50\beta+20)\right]
=12\beta^3e^{\beta s}h(s).$$
Here $h(s)$ is a quadratic polynomial, whose
minimum locates at $\frac{5\beta-5}{\beta}$.
Therefore $h(s) \geq h (\tfrac{5\beta-5}{\beta} ) > 0$.
Then $\psi^{(5)}(s) = 12\beta^2e^{\beta x}h(s)-24 > 12\times 2-24>0$.
Meanwhile, simple computation yields
$$ \psi(0)=0 \qquad \text{and}
\qquad \psi^{(k)}(0)>0~~\quad \text{with}~~ \quad
1\le k\le 5.  $$
Then we conclude that  $\psi(s) > 0$ for all  $s\in (0, \infty)$, and hence
 $r(-s) \ge e^{-\beta s}$ in $(0, \infty)$.

Next we show the bound that $r(s) \le e^{-\alpha s}$ for $s\in (0,s_*)$, which is equivalent to show
$$\varphi(s) = (12s^2-120s+360)e^{\alpha s}
-(s^4+12s^3+72s^2 + 240s + 360) \le 0 \qquad \forall~s\in (0,s_*).$$
Similar to the preceding argument, let $g(s) = (20-50 \alpha + 30 \alpha^2  +10\alpha s -10\alpha^2s +\alpha^2s^2)$. Then
$$  \varphi^{(5)}(s) = 12\alpha^3e^{\alpha s}
(20-50 \alpha + 30 \alpha^2
+10\alpha s -10\alpha^2s +\alpha^2s^2)
= 12\alpha^3e^{\alpha s}  g(s). $$
Here $g$ is a quadratic polynomial with minimum at $\frac{5\alpha-5}{\alpha}$. Therefore, $g(s)\ge g(\tfrac{5\alpha-5}{\alpha}) \approx -2.62$.
Meanwhile, we observe that $g(0) = -0.217 <0$, so
there is a unique root of $g$ in $(0, \infty)$.
It is easy to find that, by means of Newton's algorithm,
that root locates at $s_0\approx0.0993$. Then $\varphi^{(5)}(s) \le 0$
for all $s\in [0,s_0]$ and $\varphi^{(5)}(s) \ge 0$ for $s\in (s_0,\infty)$. Noting that $\varphi^{(4)}(s_0) < 0$ and $\varphi^{(4)}(0) < 0$, so $\varphi^{(4)}(s) < 0$ in $[0,s_0]$ and $\varphi^{(4)}$ admits a unique root in $(s_0,\infty)$, named as $s_1$.
Then Newton's algorithm implies $s_1\approx  1.6849$.
Repeating this argument, we are able to show that
$\varphi^{(3)}(s) < 0$ in $[0,s_1]$ and $\varphi^{(3)}$ has a unique root $(s_1,\infty)$, namely $s_2\approx 3.0558$.
Then we derive that
$\varphi{''}(s) < 0$ in $[0,s_2]$ and  $\varphi{''}(s)$
has a unique root in $(s_2,\infty)$,
denoted as $s_3\approx 4.3640$.
Similarly, $\varphi{'}(s) < 0$ in $[0,s_3]$ and  $\varphi{'}(s)$
has a unique root in $(s_3,\infty)$, named as $s_4\approx 5.6285$.
Finally, since $\psi(0)=0$ and $\varphi(s_4)<0$, we conclude that
$\varphi(s)<0$ in $(0,s_4]$, and $\varphi(s)$ has a unique root in $(s_4,\infty)$. Then the fact that $\varphi(s_*) \approx -447.65<0$ implies $\varphi(s)<0$ in $(0,s_*)$.
This completes the proof of the claim \eqref{claim:sand}.
As a result, for any $J\ge 2$, we arrive at $e^{-\beta s} \le r(-s/J)^J \le e^{-\alpha s}$
for all $s\in(0, J s_*)$ which  implies
\begin{equation*}
    \sup_{s\in(0, J s_*)}
    \left|\frac{(1+s) r(-s/J)^J-1}{s}\right| \le 0.31.
\end{equation*}

Next, we intend to show the claim that
\begin{equation}\label{claim:bound-4}
 \sup\limits_{(s_*, \infty)}
   \big|r(-s)\big| =   \sup\limits_{(s_*, \infty)}  \big|\frac{12s^2-120s+360}{s^4+12s^3+72s^2 + 240s + 360}\Big| \le  0.02.
\end{equation}
In order to establish a bound for the supremum, we note
$$\frac{d}{ds}r(-s)
= \frac{-24s^5+216s^4+1440s^3-1440s^2-43200s-129600}
{(s^4+12s^3+72s^2 + 240s + 360)^2}, $$
and we will show that it admits a unique root in $(s_*, \infty)$,
denoted by $s_5$. Noting that, with
$\mu(s) = -s^5+9s^4+60s^3-60s^2-1800s-5400, $
we have
$$\frac{d}{ds}r(-s) = \frac{24\mu(s)}{(s^4+12s^3+72s^2 + 240s + 360)^2}.$$
So it suffices to show that $\mu(s)$ admits a unique  root in $(s_*, \infty)$. Since
$\mu^{(3)}(s)=-60s^2+216s+360,$ being a quadratic polynomial,
it gains the maximum at $1.8$ where
$\mu^{(3)}(1.8) = 554.4>0$.
Noting that $s_*>1.8$ and $\mu^{(3)}(s_*)=-945.6<0$,
we conclude that $\mu^{(3)}(s)<0$ for all $s\in(s_*, \infty)$.
Moreover, since $\mu''(s_*) \approx 1033.3 >0$ and $\mu''(\infty) =-\infty$, $\mu''(s)$ admits a unique root $s_6\approx 7.6503 \in (s_*, \infty)$. Then we know that $\mu''(s) > 0$ in $(s_*, s_6)$
and $\mu''(s) < 0$ in $(s_6, \infty)$.
Similarly, we have $\mu'(s_*) >0$ and $\mu'(s_6) >0$,
so  $\mu'(s) > 0$ in $[s_*, s_6]$. This together with the fact  $\mu'(\infty) <0$ implies that
$\mu'(s)$ has a unique root $s_7\approx 10.166 \in (s_6, \infty)$.
Finally, using the facts that $\mu(s_*)>0$ and $\mu(s_7)>0$,
we know $\mu(s) >0 $ in  $(s_*, s_7)$ and
 $\mu(s)$ has a unique root $s_8 \approx 12.28 \in (s_7, \infty)$.
Therefore $r(-s)$ is increasing in $(s_*, s_8)$,
and decreasing in $(s_8, \infty)$.
Noting that $r(-s)\approx 0.0088$, $r(-s_8)\approx 0.0118$, and $r(-\infty)=0$, we arrive at
$\sup\limits_{(s_*, \infty)}
\big|r(-s)\big| \le 0.02.$

As a result, the estimate \eqref{claim:bound-4} implies that
for $s\in (Js_*,\infty)$ and $J\ge 2$
\begin{equation*}
\begin{split}
    \left|\frac{(1+s) r(-s/J)^J-1}{s}\right|
    &\le \frac{1+s}{s}|r(-s/J)^J|+s^{-1}
    \le \frac{1+Js_*}{Js_*} (0.02)^J + (Js_*)^{-1}    \\
    &\le  \frac{14.6}{13.6} (0.02)^2 + 13.6^{-1}
    \approx 0.074 \le 0.31.
\end{split}
\end{equation*}
This completes the proof of \eqref{eqn:lobatto-4} as well as the proposition.
\end{proof}

\begin{remark}
Propositions \eqref{prop:lobatto-2}-\eqref{prop:lobatto-4} show that, for two-, three-, four-stage Lobbatto IIIC schemes, the convergence factor is (at worst) $0.31$,
and there is no restriction
on the ratio between the coarse time step and fine
time step.
It is still possible to improve those estimations, by means of Theorem \ref{thm:conv-1}.
For example, one may obtain a smaller
convergence factor $\gamma$ by choosing a bigger $\alpha$ and
a smaller $\beta$, which might not affect the threshold $J_*=2$.
\end{remark}

\begin{remark}\label{rem:calahan}
In the proof of Propositions \eqref{prop:lobatto-2}-\eqref{prop:lobatto-4}, we employ the \textsl{L-stability} ($r(-\infty) = 0$) of the two-, three-, four-stage Lobbatto IIIC schemes. If the $r(-\infty) \in (0,1)$,
the analysis might be more technical, and the convergence might be
slow for small step ratio $J$; see e.g. Figure \ref{fig:ex1-b} for the Calahan scheme \eqref{eqn:Calahan}--\eqref{eqn:Calahan-p}.
However, Theorem \ref{thm:conv-1} guarantees the existence of the
threshold $J_*$ such that for any $J\ge J_*$ the convergence factor is close to $0.3$.
\end{remark}

\section{Numerical results}\label{sec:numerics}
In this section, we shall present some numerical examples to illustrate and complement our theoretical results.
To begin with, we use the one-dimensional diffusion models to show the sharpness of our convergence analysis in Sections \ref{sec:convergence} and \ref{sec:RK}.

\paragraph{Example 1. Linear Diffusion Models}
We consider the following initial-boundary value problem of parabolic equations
\begin{equation}\label{eqn:diffusion}
    \left \{
\begin{aligned}
&\partial_t u (x,t) - \partial_{xx} u(x,t) = f(x,t), &&\quad 0<t<T,\\
& u(x,t)=0,&&\quad x\in\partial\Omega,~0<t<T, \\
& u(x,0)=u^0(x),&&\quad x\in\Omega.
\end{aligned}
\right.
\end{equation}
where $\Omega=(0,\pi)$ and $T=1$. We consider the following two sets of problem data
\begin{itemize}
    \item[(a)] $u^0(x) = x^{5}(1-x)^{5}/(\pi/2)^{10}$
    and $f\equiv0$;
    \item[(b)] $u^0(x) = \chi_{(0,\frac\pi2)}(x)$ and $f=\cos(t) \sin(x)$, where $\chi_{(0,\frac\pi2)}(x)$ denotes the step function:
    \begin{equation*}
 \chi_{(0,\pi/2)}(x)  =
 \begin{cases}
 1, \quad x\in(0,\pi/2),\\
0, \quad \text{elsewise}.
 \end{cases}
\end{equation*}
\end{itemize}

In the computation, we divided the domain $\Omega$ into with $M$ equal subintervals of length $h=\pi/M$ and apply the Galerkin finite element with piesewise linear polynomials to discretize in space.
We examine the error between the parareal iterative solution $U_k^n$ and the exact time stepping solution $U^n$.
In our computation, we fixed 
spatial mesh size $h=\pi/1000$, and choose the initial guess
$U_0^n = u^0$ for all $n=0,\ldots,N$.

In example (a), the data is sufficiently smooth and compatible to the homogeneous Dirichlet boundary condition. In fact, it is easy to show show that $u^0 \in \text{Dom}((-\Delta)^{3+\epsilon})$ with $\epsilon\in(0,1/4)$  (see e.g. Lemma \cite[Lemma 3.1]{Thomee:2006}). For this case of regular data, Bal showed that the first several parareal iterations converge linearly with the rate $O(\Delta T)$; see cf. \cite{Bal:2005}. This is fully supported by the numerical results presented in Figure \ref{fig:ex1-a}, where we show the convergence of parareal algorithm for $2$- and $3$-stage Lobatto IIIC methods with
fixed $J=10$ and $\Delta T = 1/100$, $1/300$, $1/600$ (and correspondingly $\Delta t = 1/1000$, $1/3000$, $1/6000$).
We observe that the convergence of the first several iterations is faster for smaller coarse step size,
but the convergence then deteriorates for the later iterations.

\begin{figure} [tbhp]
\centering
\includegraphics[width=1\textwidth]{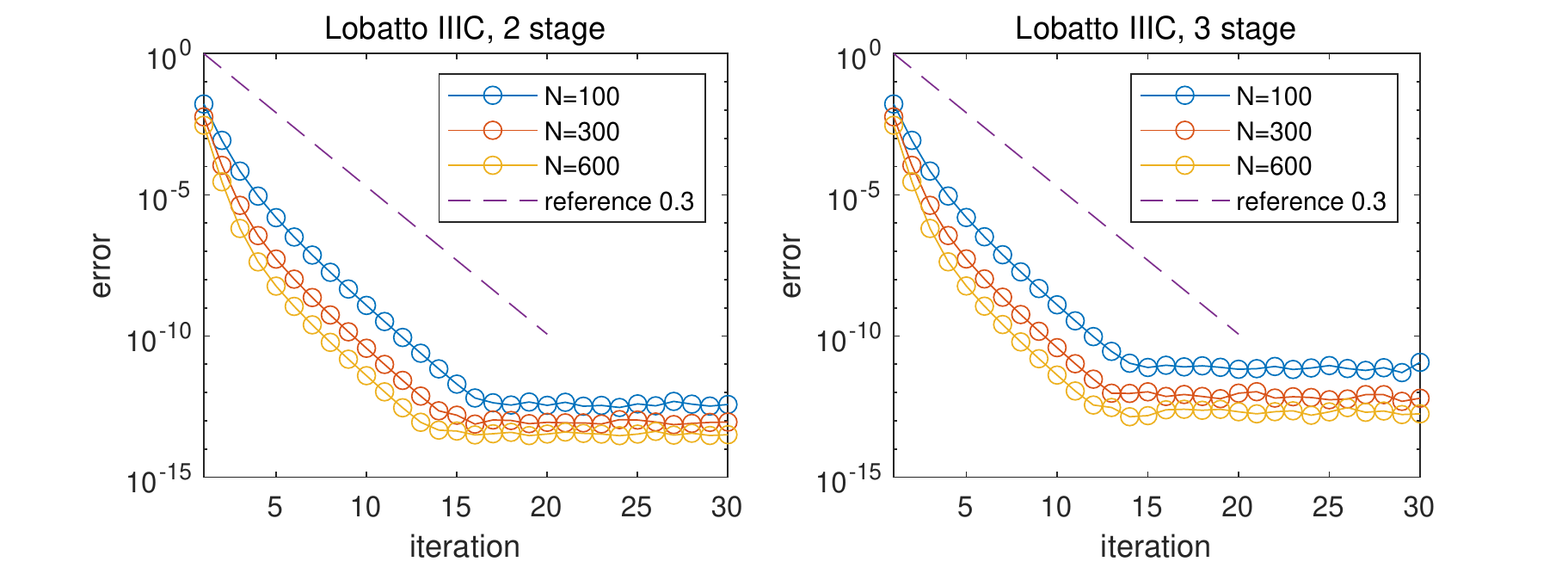}
\vskip-5pt   \caption{Example 1 (a): smooth data. Convergence of the parareal algorithm for $2$- and $3$-stage Lobatto IIIC methods with fixed mesh ratio $J=10$ and various coarse step sizes $1/N$, $N=100$, $300$, $600$.}
   \label{fig:ex1-a}
\end{figure}

In Figure \ref{fig:ex1-b}, we show the convergence of parareal algorithm for $2$-, $3$-, $4$-stage Lobatto IIIC methods solving
parabolic equation with nonsmooth initial data, i.e. Example 1 (b).
We fixed the fine step size $\Delta t = 1/3000$ and use different step ratios $J=2$, $3$ and $10$.
The numerical experiments clearly show that the parareal iterations converge linearly with convergence factor near $0.3$
for all $J\ge 2$. Meanwhile,
we observe that the convergence factor is independent of the ratio between coarse and find step sizes.
These phenomenon fully support our theoretical findings in  Propositions \ref{prop:lobatto-2}--\ref{prop:lobatto-4}.
Moreover, we test another time integrator,
called Calahan scheme \cite[eq. (1.9)]{Zlama:1974}, defined by
\begin{equation}\label{eqn:Calahan}
    r(-s) = 1-\frac{s}{1+bs} - \frac{\sqrt{3}}{6} \Big(\frac{s}{1+bs}  \Big)^2,\qquad \text{with}~~ b = \frac12\Big( 1 + \frac{\sqrt{3}}{3} \Big)
\end{equation}
and
\begin{equation}\label{eqn:Calahan-p}
\begin{aligned}
c_1 = \frac13,\qquad & p_1(-s) = \frac{(1/2 + \sqrt3) + (\sqrt3/2)s}{(1+bs)^2};\\
c_2 = \frac23,\qquad & p_2(-s) = \frac{(1/2 - \sqrt3) + (1/2 - \sqrt3/2)s}{(1+bs)^2}.
\end{aligned}
\end{equation}
The Butcher tableau is given by
\begin{equation}
\label{calahan}
\begin{tabular}{cc|c}
 $\frac16\sqrt{3}+\frac23$ &  $-\frac16\sqrt{3}-\frac13$  & $\frac13$\\[2pt]
 $-\frac16\sqrt{3}+\frac13$  & $\frac16\sqrt{3}+\frac13 $& $\frac23$ \\[2pt]
\hline
 $\frac12 $  & $\frac12$  & $\; $ \\
\end{tabular}
\ =:
\begin{tabular}{c|c}
 $\AA$ & $c$ \\[1pt]
\hline
 $b^\top$  & $\vphantom{\sum^{\sum^\sum}} $ \\
\end{tabular}
\end{equation}
It is easy to see that  $r(-s)$ is a decreasing function on $(0, \infty)$ and $r(-\infty)=1-\sqrt{3} \in (-1,0)$, so it is \textsl{A-stable}, but not \textsl{L-stable}.
Besides, the scheme is accurate of order $k=3$.
Therefore, the Calahan scheme satisfies Conditions (P1)--(P3).
Numerical results show that the converegnce of the corresponding parareal iterations is much slower than $0.3$ for small $J$.
This might be due to the fact that $|r(-\infty)| > 0$; see Remark \ref{rem:calahan}. However, for large $J$, the numerical results indicate a convergence rate close to $0.3$,
as predicted by Theorem \ref{thm:conv-2}.\vskip10pt

\begin{figure} [tbhp]
\centering
\includegraphics[width=0.9\textwidth]{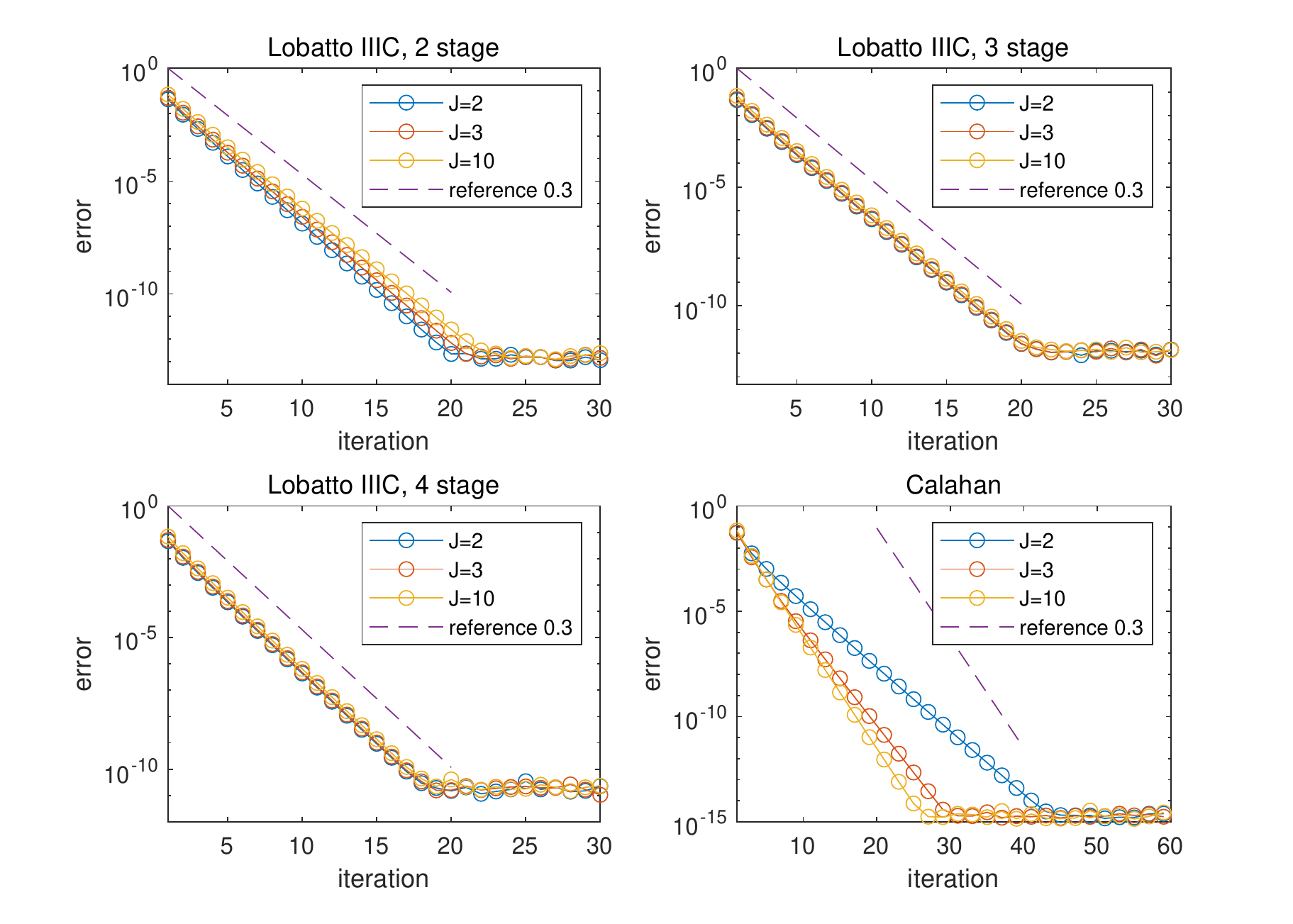}
   \caption{Example 1 (b): nonsmooth data. Convergence of the parareal algorithm for $2$-, $3$-, $4$-stage Lobatto IIIC methods and Calahan method with fixed fine step size $\Delta t = 1/3000$ and various ratios of coarse step size and fine step size.}
   \label{fig:ex1-b}
\end{figure}

\paragraph{Example 2. Semilinear Parabolic Equations}
In this part, we shall examine the convergence of parareal algorithm for solving the initial-boundary value problem of the semilinear parabolic equations
\begin{equation}\label{eqn:AC}
\left\{
\begin{aligned}
    \partial_t u   -  \partial_{xx} u &= \frac{1}{\epsilon^2}(u -u ^3)=:f(u ), \quad \text{for all}~~ x\in \Omega,t\in(0,T],\\
    \partial_x u(x,t)&=0, \qquad \qquad  \text{for all}~~ x\in\{0,\pi\},t\in(0,T],\\
    u(x,0)&=u^0(x),\quad\quad \text{for all}~~ x\in \Omega.
\end{aligned}
\right.
\end{equation}
The model \eqref{eqn:AC}, called Allen--Cahn equation, was originally introduced by Allen and Cahn in \cite{AC} to describe the motion of anti-phase boundaries in crystalline solids. In the context, $u$ represents the concentration of one of the two metallic components of the alloy and
the parameter $\epsilon$ involved in the nonlinear term  represents the width of interface. Recent decades, the Allen--Cahn equation has become one of basic phase-field equations, which has been widely applied to many complicated moving interface problems in materials science and fluid dynamics  \cite{anderson1998diffuse,chen2002phase,yue2004diffuse}.

In our numerical scheme, the coarse propagator  is the semli-implicit backward Euler scheme: for given $u^n$, look for $u^{n+1}$ such that for all $\phi\in H^1(0,\pi)$
$$ (u^{n+1}, \phi) + \Delta T(\partial_{x} u^{n+1},\partial_{x} \phi)= (u^n, \phi) + \Delta T (f(u^n), \phi),$$
which is uniquely solvable and first-order accurate, see e.g. \cite[Theorem 14.7]{Thomee:2006}.
Meanwhile, the fine propagator  is an arbitrary fully implicit high-order single step integrator (such as the Lobatto IIIC schemes or the fully implicit Calahan scheme): for given $u^n$, look for $u^{n+1}$ such that for all $\phi\in H^1(0,\pi)$
\begin{equation}\label{eqn:RK-fully}
\left\{
\begin{aligned}
    (u^{ni},\phi) &= (u^{n-1},\phi) + \Delta t \sum_{j=1}^m a_{ij} \Big(-(\partial_xu^{nj}, \partial_x \phi) +  (f(u^{nj}),\phi) \Big) ~~ \text{for} ~1\le i\le q, \\
    (u^{n},\phi) &= (u^{n-1},\phi) + \Delta t \sum_{i=1}^m b_i \Big(-(\partial_x u^{ni},\partial_x \phi) +  (f(u^{ni}),\phi)\Big),
\end{aligned}
\right.
\end{equation}
where the nonlinear system is uniquely solvable for sufficiently small step size, and we solve it
by use Newton's algorithm. Note that the fine propagator  is fully nonlinear and hence time consuming where the coarse propagator  is a linear scheme, so the application of parareal algorithm is able to significantly improve the efficiency.

\begin{figure} [tbhp]
\centering
\includegraphics[width=0.9\textwidth]{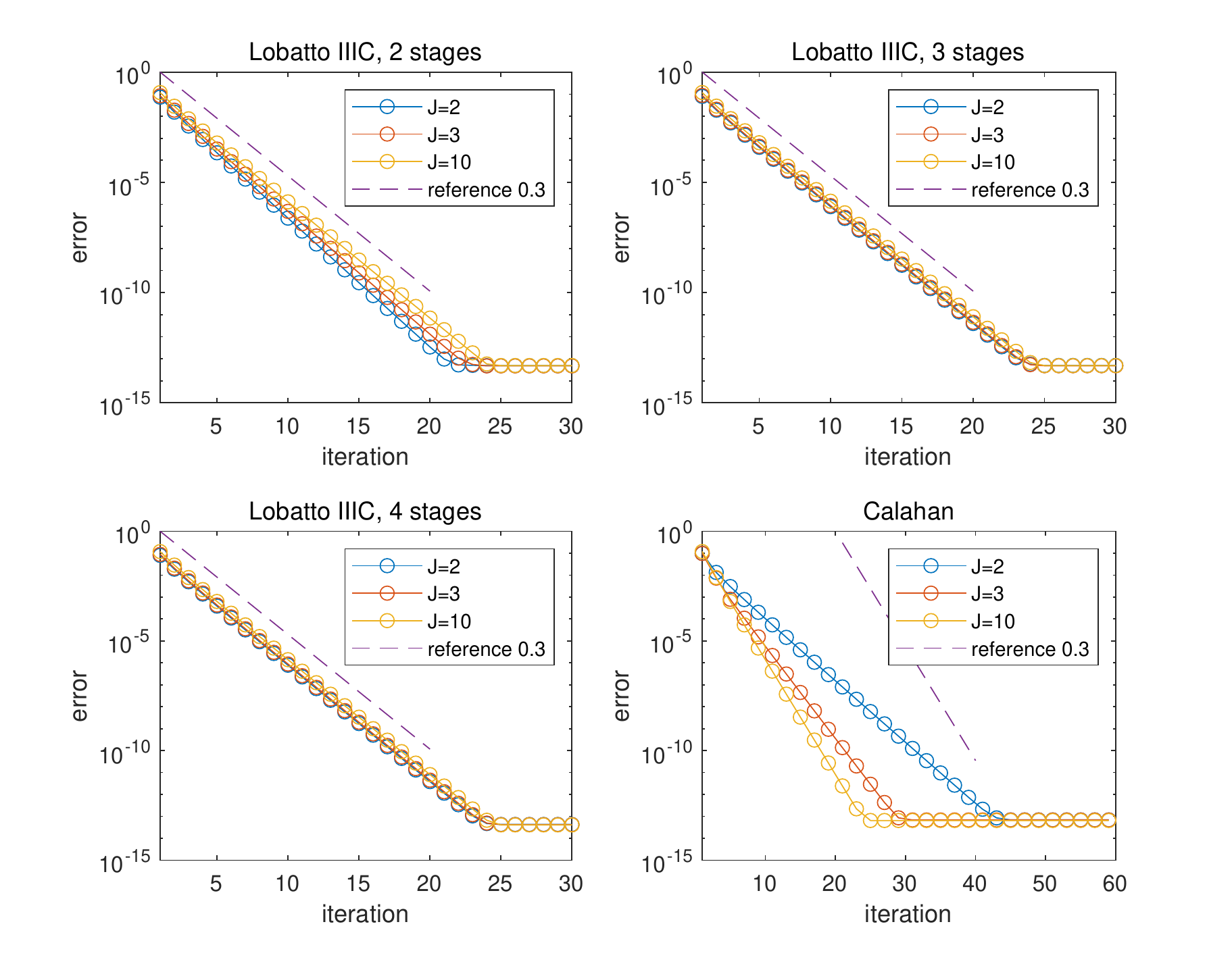}
   \caption{Example 2. Convergence of the parareal algorithm for $2$-, $3$-, $4$-stage Lobatto IIIC methods and Calahan method with fine step size $\Delta t = 1/600$ and various step ratios $J=2$, $3$, $10$.}
   \label{fig:ex2}
\end{figure}

In Figure \ref{fig:ex2}, we show the convergence of parareal algorithm for $2$-, $3$-, $4$-stage Lobatto IIIC methods and the Calahan method solving
the semilinear parabolic equation \eqref{eqn:AC}
with $\epsilon=1$ and $T=0.1$.
The fine step size is fixed and we examine the convergence for different step ratios.
Similar to the linear problem, for Lobatto IIIC methods,
the numerical experiments clearly show that the parareal iterations converge linearly with convergence factor near $0.3$
for all $J\ge 2$, while for the Calahan method the parareal iterations converge slowly for a small $J$.
The convergence analysis for the nonlinear problem warrants further investigation in our future studies. \vskip10pt


\section{Conclusion}
In this paper, we revisit the popular parareal algorithm for solving the linear parabolic equations, where the coarse propagator is fixed to the backward Euler method
and the fine propagator could be an arbitrarily high-order
single step integrator.
If the stability function of the fine propagator satisfies $|r(-\infty)|<1$,
we show that there must exist some critical threshold $J_*>0$ such
that the parareal solver converges as fast as Parareal-Euler with a convergence rate near $0.3$,
provided that the ratio between the coarse time step and fine
time step, named as $J$, satisfies $J \ge J_*$.
The convergence is robust even if the problem data is nonsmooth.  Moreover, we examine some popular high-order single step methods, e.g., two-, three- and four-stage Lobatto IIIC methods,
and verify a convergence factor
$0.31$ with the threshold $J_* = 2$.
The argument in the paper could be easily extended to the convection-diffusion equations.
Numerical experiments fully support our theoretical findings.

Some interesting questions are still open:
\begin{itemize}
    \item First of all, the argument in the paper, relies on the spectrum decomposition, which only works for the linear parabolic problem (with time-independent operator $A$). How to extend the argument to the case that $A$ is time-dependent and the nonlinear problem is still unclear. One possible approach is to combine the current analysis and the perturbation argument, as people did for the classical error analysis. Besides, in order to keep some important physical properties, like the maximum principle in the parabolic system, or the energy stable in the gradient flow system, many strategies are proposed in the development of numerical schemes, such as stabilization method \cite{ShenTangYang:2016, DuJuLiQiao:SIAMREV}, IEQ/SAV method \cite{ShenXuYang:SIAMRev, Yang:IEQ}, postprocessing method \cite{LiYangZhou:cut, Xia:post}. The parareal algorithm for those novel methods awaits theoretical studies.
    \item Moreover, in the current paper, we only consider the algorithm where the elliptic operator $A$ for the coarse and fine propagators keeps the same, which means we use the same spatial discretization. It is also interesting to analyze the parareal algorithm in the case that the spatial mesh sizes for coarse and fine propagators are different, i.e. $h_{\text{coarse}} \gg h_{\text{fine}}$. This is closely related to the space-time two grid method, whose convergence rate (robust with respect to the problem data) still awaits theoretical justification.
    \item Finally, we are interested in the parareal algorithm where the coarse or the fine propagators are stable linear multistep integrators. Both the development and the analysis of such algorithms are completely open, and the argument in current paper is not directly applicable. See some preliminary discussion about BDF2 scheme in \cite{Maday:BDF2}.
\end{itemize}

\vskip5pt

\bibliographystyle{siam}

\end{document}